\newtheorem{prop}{Proposition}
\newtheorem{hyp}{Hypothesis}
\newtheorem{rem}{Remark}[section]
\newtheorem{definition}{Definition}
\def\N {\mathbb{N}}
\def\R {\mathbb{R}}
\def\Z {\mathbb{Z}}
\def\T {\mathbb{T}}
\renewcommand{\P}{\mathcal{P}}
\newcommand{\x}{x^N}
\newcommand{\w}{w^N}
\newcommand{\Rd}{\R^d}
\newcommand{\setn}{\{1,\cdots,N\}}
\newcommand{\tmu}{\tilde{\mu}}
\newcommand{\bx}{\bar{x}}
\newcommand{\setd}{\{1,\cdots,d\}}
\newcommand{\bmu}{\bar{\mu}}
\newcommand{\dbl}{d_\mathrm{BL}}
\newcommand{\Lip}{\mathrm{Lip}}
\newcommand{\dsp}{\displaystyle}
\newcommand{\BL}{\mathrm{BL}}
\newcommand{\tZ}{\tilde{Z}}
\newcommand{\PP}{\mathbb{P}}
\newcommand{\PR}{\P(\R^d)}
\newcommand{\se}{\zeta}
\newcommand{\elts}{\{1,\cdots,N\}}
\newcommand{\txn}{x_N}
\newcommand{\tmn}{m_N}
\DeclarePairedDelimiter\floor{\lfloor}{\rfloor}
\newcommand*\circled[1]{\tikz[baseline=(char.base)]{
            \node[shape=circle,draw,inner sep=2pt,solid] (char) {#1};}}
\begin{document}

\title{Large-population limits of non-exchangeable particle systems}

\author{
Nathalie Ayi\thanks{Sorbonne Universit\'e, CNRS, Universit\'e Paris Cit\'e, Inria, Laboratoire Jacques-Louis Lions (LJLL), F-75005 Paris, France} 
 \and
Nastassia Pouradier Duteil\thanks{Sorbonne Universit\'e, Inria, CNRS, Universit\'e Paris Cit\'e, Laboratoire Jacques-Louis Lions (LJLL), F-75005 Paris, France} }


\maketitle

\abstract{ 
A particle system is said to be non-exchangeable if two particles cannot be exchanged without modifying the overall dynamics. 
Because of this property, the classical mean-field approach fails to provide a limit equation when the number of particles tends to infinity.
In this review, we present novel approaches for the large-population limit of non-exchangeable particle systems,
based on the idea of keeping track of the identities of the particles. 
These can be classified in two categories. 
 The non-exchangeable mean-field limit describes the evolution of the particle density on the product space of particle positions and labels.
Instead, the continuum limit allows to obtain an equation for the evolution of each particle's position as a function of its (continuous) label.
We expose each of these approaches in the frameworks of static and adaptive networks.
}

\tableofcontents




\section{Introduction}\label{sec:Intro}

\subsection{Setting}
Interacting particle systems refer to a large class of coupled differential equations modeling populations of interacting agents (``particles'') susceptible to exhibit global organizational patterns without any centralized intelligence. 
They are used by various scientific communities to model many different phenomena, such as opinion formation \cite{HK}, organization within animal groups \cite{Aoki82, Ballerini08, Lopez12}, synchronization of coupled oscillators \cite{Kuramoto75} and biological tissue formation \cite{BDPZ20}.
These models have also been of great interest to the mathematical community. In particular, the study of  their long-time behavior can reveal remarkable \textit{global} patterns, although the particles interact only \textit{locally} (see for instance \cite{BonnetPouradierDuteilSigalotti21,BST22,HaKimZhang18,HaLiu09,JadbabaieMoteeBarahona04}). Another direction of work consists of studying the control or optimal control of such systems, to drive the population to a target configuration (see \cite{BulloCortezMartinez09} and references within). A third area of interest concerns their so-called \textit{large-population limit}, which is the focus of the present review.

The need for large-population limits of interacting particle systems becomes evident when one considers the size of some of the populations of interest. For instance, starlings are known to form flocks of several thousands of individuals \cite{Ballerini08}, and neural networks in mammals contain up to $10^{11}$ neurons \cite{JabinPoyatoSoler21}. 
In this context, simulating such large populations via a system of $N$ coupled equations (with up to $N^2$ pairwise interactions) rapidly becomes computationally too expensive as the number of particles $N$ increases. 
To resolve this issue, a crucial idea introduced in the pioneering works \cite{BraunHepp77, Dobrushin79, Golse16, JabinWang18, NeunzertWick74, Serfaty20, Sznitman91} consists of focusing on the evolution of the population density, instead of the individual trajectories.

\paragraph{Mean-field limit of exchangeable particle systems.} Consider a system of $N$ interacting particles characterized by their positions $(\x_i)_{i\in\setn}\in C([0,T];(\R^d)^N)$, whose evolution is given by the general system of coupled differential equations:
\begin{equation}\label{eq:micro_exchangeable}
    \begin{cases}
\displaystyle \frac{d}{dt} \x_i(t) = \frac{1}{N}  \sum_{j=1}^N \phi( \x_i(t),\x_j(t)) \quad \text{ for all } i\in\setn, \quad t\in [0,T] \\
\x_i(0) = x_i^{N,0}\text{ for all } i\in\setn,
\end{cases}
\end{equation}
where
\begin{itemize}
    \item $\setn$ denotes the set of labels of the interacting particles, which represents their individual identities; 
    \item $(\x_i)_{i\in\setn}\in C([0,T];(\R^d)^N)$ denotes the state of the interacting particles, and can for instance represent positions \cite{FaureMaury15}, opinions \cite{HK}, velocities \cite{CS07}, phases \cite{Kuramoto75}, angles \cite{Vicsek95}. The dimension $d$ of the state space is then determined by the model;
    \item $\phi:\R^d\times\R^d\rightarrow\R^d$ is the so-called \textit{interaction function}, encoding the pairwise interaction between any two positions $x_i$ and $x_j$.
\end{itemize}

As $N$ tends to infinity, one can show that the system \eqref{eq:micro_exchangeable} is well approximated by its so-called \textit{mean-field limit} $\mu\in C([0,T];\P(\R^d))$ which satisfies the following Vlasov equation:
\begin{equation}\label{eq:meanfield_exchangeable}
\begin{cases}
\displaystyle \partial_t \mu_t(x) + \nabla_x\cdot \left( \left( \int_{\R^d} \phi(x,y) d\mu_t(y) \right) \mu_t(x) \right) = 0,\\
\mu_{t=0} = \mu_0.
\end{cases} 
\end{equation}
For reasons that will become clear, in this review we will refer to \eqref{eq:meanfield_exchangeable} as the \textbf{exchangeable mean-field limit} of the particle system.
The link between the particle system \eqref{eq:micro_exchangeable} and its mean-field limit equation \eqref{eq:meanfield_exchangeable} becomes clear when introducing the so-called \textit{empirical measure} $\mu^N$ constructed from the solution $(\x_i)_{i\in\setn}$ to the microscopic system \eqref{eq:micro_exchangeable}:
\begin{equation}\label{eq:empirical_measure_exchangeable_intro}
    \mu^N_t := \frac{1}{N} \sum_{i=1}^N \delta_{\x_i(t)},
\end{equation}
and which satisfies the mean-field equation \eqref{eq:meanfield_exchangeable}. Convergence of the solution $(\x_i)_{i\in\setn}$ of the microscopic system \eqref{eq:micro_exchangeable} towards the solution $\mu$ to the mean-field equation \eqref{eq:meanfield_exchangeable} can then be obtained as the result of a stability argument for the solution to \eqref{eq:meanfield_exchangeable}, of the form:
\[
W_1(\mu_t,\mu_t^N) \leq C(T) W_1(\mu_0,\mu_0^N),
\]
where $W_1$ is the $1$-Wasserstein (also known as Rubinstein-Kantorovich) distance \cite{Dobrushin79}.

\paragraph{Exchangeable vs. non-exchangeable particle systems.}
Interacting particle systems can be broadly classified into two categories: \textit{exchangeable} (or indistinguishable) and \textit{non-exchangeable} (or non-indistinguishable). A particle system is said to be exchangeable if any two particles can be exchanged without modifying the dynamics of the other particles. More precisely, 
\begin{definition}\label{def:exchangeability}
     Let $(x_i)_{i\in\setn}\in C([0,T];(\R^d)^N)$ denote the trajectories of $N$ interacting particles
     satisfying 
    \[
    \begin{cases}
    \dsp \frac{d}{dt} \x_i(t) = F_i(t,\x(t)), \quad i\in \setn, \quad t\in [0,T],\\
    \x_i(0) = x_i^{N,0}, \quad i\in\setn,
    \end{cases}
    \] 
    and let $\x_i(t) := \Psi_t\# x_i^{N,0}$ denote their push-forward by the flow of $F$.
    The particle system is said to be \emph{exchangeable} (or, equivalently, \emph{indistinguishable}) if, for any permutation function $\sigma:\setn\rightarrow\setn$ of the sets of particle labels, it holds
    \[
    \begin{split}
        \forall i\in \setn, \; y^N_i(0)&=\x_{\sigma(i)}(0) \\
    &  \Rightarrow\quad \forall i\in \setn, \; \forall t\in [0,T], \; (\Psi_t\#y^{N,0}_i)(t)= (\Psi_t\#x^{N,0}_{\sigma(i)})(t).
    \end{split}
    \]
\end{definition}

In other words, an interacting particle system is exchangeable if one can relabel the particles without modifying the dynamics. Trivially, the particle system \eqref{eq:micro_exchangeable} is exchangeable, since the right-hand side does not depend explicitly on the particle labels. 

Exchangeable particle systems provide a good modeling framework for many applications in which the \textit{labels}, which represent the \textit{identities} of the particles, do not influence the dynamics. In this case, considering all particles to be identical is a good approximation, and has been validated experimentally \cite{Aoki82, Ballerini08, Lopez12}.

However, in other cases, the particles' labels play a significant role, which requires the use of models for non-exchangeable particles.
For instance, the Kuramoto model, used to describe the evolution of coupled oscillators' phases $(\x_i)_{i\in\elts}\in C([0,T];(\T^1)^N)$, can be written, for some $k \in \{1,\dots,N\}$, as
\begin{equation}\label{eq:Kuramoto_intro}
    \frac{d}{dt}\x_i(t) = u^N_i + \frac{C}{N} \sum_{j=1}^{N} \sin(\x_j(t)-\x_i(t)).
\end{equation}
Notice that in \eqref{eq:Kuramoto_intro}, the evolution of each oscillator's phase $\x_i$ depends on an intrinsic frequency $u^N_i$, so this system does not belong to the class of non-exchangeable particle systems.

\paragraph{Why the classical mean-field limit fails for non-exchangeable particle systems.}
In this review, we will focus on \textit{non-exchangeable} particle systems of the form: 
\begin{equation}\label{eq:micro_nonexchangeable}
\begin{cases}
\displaystyle \frac{d}{dt} \x_i(t) = \frac{1}{N}  \sum_{j=1}^N \w_{ij} \phi(\x_i(t),\x_j(t)) \quad \text{ for all } i\in\setn, \;t\in [0,T], \\
\x_i(0) = x_i^{N,0}.
\end{cases}
\end{equation}
In system \eqref{eq:micro_nonexchangeable}, the effect of the particles' labels $\setn$ on the dynamics is decoupled from that of the particles' positions $(\x_i)_{i\in\setn}$, as it is introduced as a family of multiplying weights $(\w_{ij})_{i,j\in\setn}\in\R^N$.
Particle systems of the form \eqref{eq:micro_nonexchangeable} can be seen as posed on an underlying weighted graph, in which the set of nodes corresponds to the set of labels $\setn$ and to each edge $(i,j)$ is attributed a weight $\w_{ij}\in\R$.

Finding a good approximation of non-exchangeable particle systems \eqref{eq:micro_nonexchangeable} when $N$ is large has been the subject of many recent works \cite{AyiPouradierDuteil23, ChibaMedvedev19, GkogkasKuehn22, KaliuzhnyiMedvedev18, KuehnXu22, JabinWang18, Medvedev14, MedvedevR, PT23}.
One main difficulty comes from the fact that the classical mean-field approach is no longer applicable. Indeed, studying the population density instead of the individual particles' trajectories entails an irreversible information loss, as one loses track of the particles' labels. 
In particular, notice that the empirical measure is blind to any permutation $\sigma:\setn\rightarrow\setn$ of the sets of indices, since 
\[\mu^N_t := \frac{1}{N} \sum_{i=1}^N \delta_{\x_i(t)} =  \frac{1}{N} \sum_{i=1}^N \delta_{\x_{\sigma(i)}(t)}.\]
Due to the definition of non-exchangeable particle systems, one cannot hope to be able to capture the dynamics of system \eqref{eq:micro_nonexchangeable} with a mean-field equation of the type \eqref{eq:meanfield_exchangeable}.

\paragraph{New results in Limits of Graphs.} Non-exchangeable particle systems of the form \eqref{eq:micro_nonexchangeable} can be seen as posed on an underlying graph. For instance, in \cite{WileyStrogatzGirvan06}, a modified version of the Kuramoto model is introduced, in which the coupled oscillators are positioned in a one-dimensional ring, so that each oscillator interacts only with $k$ oscillators on each side (for $k<\frac{N}{2}$):
\begin{equation}\label{eq:Kuramoto_intro_2}
    \frac{d}{dt}\x_i(t) = u + \frac{C}{N} \sum_{j=i-k}^{i+k} \sin(\x_j(t)-\x_i(t)).
\end{equation}
System \eqref{eq:Kuramoto_intro_2} can be interpreted as a system of differential equations on the graph $G_N = \langle V(G_N), E(G_N)\rangle$ whose vertex set is $V(G_N) = \{1, \dots,N \}$ and edge set is 
$$E(G_N) = \{(i,j) \in \{1,\dots,N\} : 1 \leq \text{dist}(i,j) \leq k \} \, \text{where dist}(i,j) = \min\{|i-j|, N- |i-j| \}.$$
The graph $G_{10}$ and its adjacency matrix $(\w_{ij})_{i,j\in\setn}$ are represented in Figure \ref{fig:graph}.
 \begin{figure}
     \centering
     \begin{tabular}{ccc}
     \includegraphics[width = 0.3\textwidth]{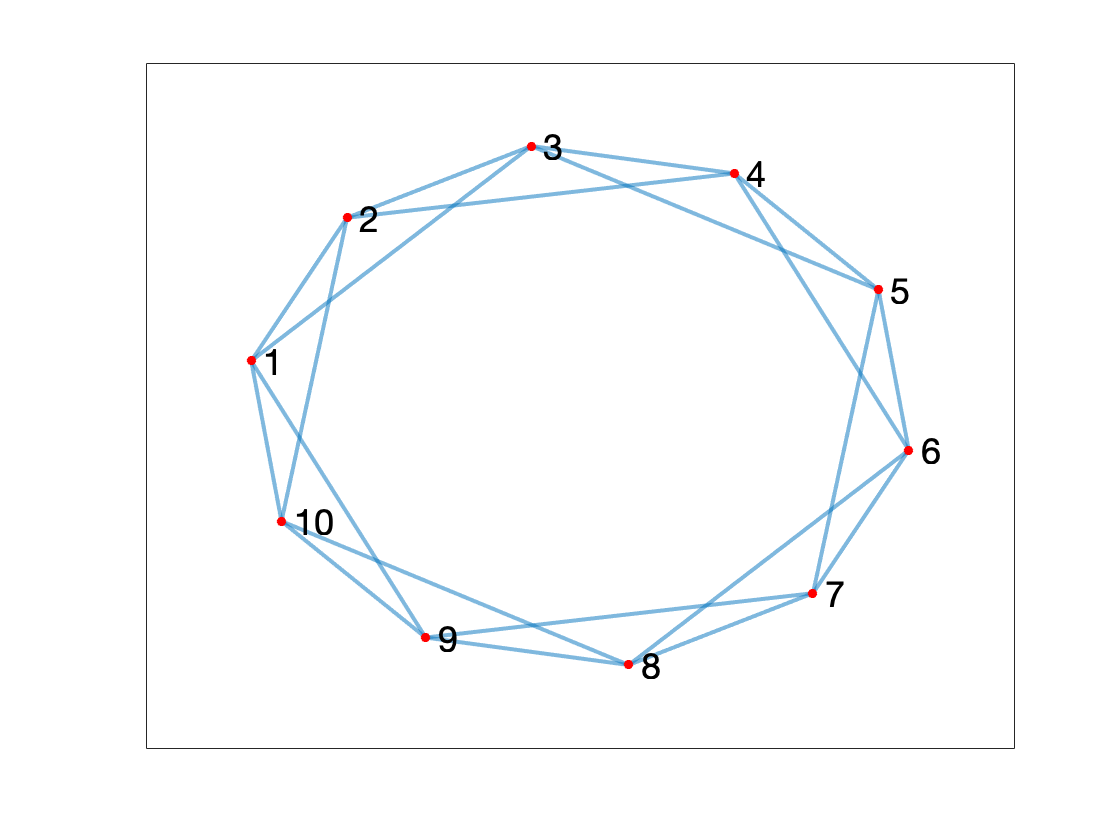}
     &
     \includegraphics[width = 0.3\textwidth]{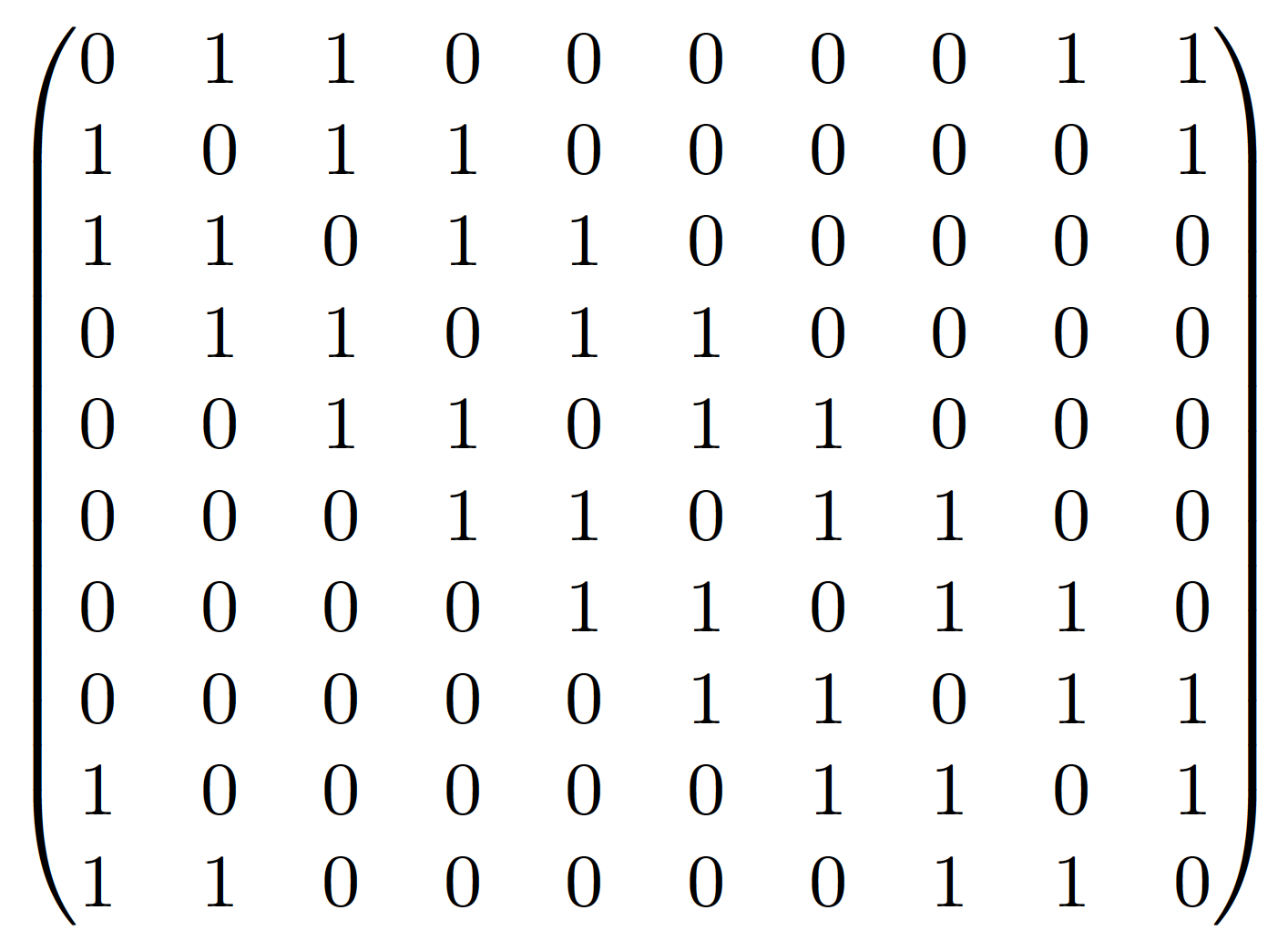}
     \end{tabular}
\caption{Graph and adjacency matrix associated with System \eqref{eq:Kuramoto_intro_2}, for $N=10$ and $k = 2$.}
\label{fig:graph}    
 \end{figure}

 Notice that the adjacency matrix of $G_N$ belongs to $M_N(\R)$, whose dimension changes with each value of $N$.
One can instead introduce the piecewise-constant function $w_{G_N}: I^2 \to \{0,1\}$, where $I:=[0,1]$, such that 
\begin{equation*}
w_{G_N}(\xi,\zeta) = \left\{\begin{array}{l}
1 \qquad \text{~if~}  (\xi,\zeta)  \in \left[\frac{i-1}{N}, \frac{i}{N}\right) \times  \left[\frac{j-1}{N}, \frac{j}{N}\right) \text{~and~}  (i,j) \in E(G_N),\\
0 \qquad \text{~otherwise.}
\end{array} \right.
\end{equation*}
The plot of $w_{G_N}$'s support is nothing else than a pixel representation of the adjacency matrix of $G_N$ (see Figure \ref{fig:pixel_graphon}). Moreover, with this new characterization, the space $L^\infty(I^2)$ to which $w_{G_N}$ belongs no longer varies with $N$, which allows to consider its limit in the same space.
 In example \eqref{eq:Kuramoto_intro_2}, if $k$ is proportional to $N$, one can show that $w_{G_N}$ converges as $N$ goes to infinity to a $\{0,1\}$-valued function denoted $w$. This function $w$ is called a \textit{graphon} in Graph Theory. This crucial object appears when considering limits of \textit{dense} sequences of convergent graphs $(G_N)_{N\geq 1}$ (see \cite{LovaszSzegedy06}), that is sequences in which $|E(G_N)|= O (|V(G_N)|^2)$ as $N$ goes to infinity. 

 \begin{rem}
     Note the graph $G_N$ associated with example \eqref{eq:Kuramoto_intro_2} is an undirected, unweighted graph (i.e. the associated adjacency matrix contains either zeros or ones). The theory applies similarly to directed and weighted graphs, for which the adjacency matrix $(\w_{ij})_{i,j\in\setn}$ takes values in $\R$ and is not necessarily symmetric.
 \end{rem}

The limit of the particle system \eqref{eq:micro_nonexchangeable} when $N$
 goes to infinity then naturally reveals objects from Graph Theory, which provides powerful tools to tackle the question of the large population limit (see \cite{BackhauszSzegedy22,LovaszSzegedy06} for example). For instance, the convergence of a graph sequence is linked to the convergence of $w_{G_N}$ to $w$ in the so-called \textit{cut-norm}, which is defined, for $w \in L^1(I^2)$, as
 \begin{equation}\label{eq:cut-norm}
     \|w\|_\square := \sup_{\substack{S,T \text{~measurable}\\\text{subsets of~} I}} \left| \int_{S \times T} w(\xi,\zeta) d\xi d \zeta\right|
 \end{equation}
 One property of this norm is that 
 \begin{equation}
 \|w\|_\square \leq \|w\|_{L^1(I^2)}.
 \end{equation}
Thus, convergence in $L^1$-norm implies convergence of the graph sequence $(G_N)_{N\geq 1}$. This explains why the $L^p$- setting is adopted in many frameworks (as will be seen in Section \ref{sec:MFLdense} and \ref{sec:Graphs_GL}). One of the drawbacks of this setting is that it is only valid for \textit{dense} graphs.

 However, recent results in \cite{BackhauszSzegedy22} allow to revisit this question in order to address intermediate densities or sparse graphs. In this article, Backhaus and Szegedy provide a general framework unifying dense and sparse graph limit theory. Their approach relies on the fact that graphs can actually be represented as operators called \textit{graphops}. More precisely, a graphop is  a bounded self-adjoint and positivity preserving operator $A:L^\infty(I) \to L^1(I)$. 
 Graphops can be seen as a generalization of graphons by defining for each graphon $w\in L^\infty(I^2)$ an associated graphop
 $$A_wf(\zeta) := \int_I w(\xi,\zeta) f(\zeta) d\zeta.$$ 
 Note that there exists alternative objects more general than graphons but less than graphops such as digraph measures \cite{KuehnXu22} or extended graphons \cite{JabinPoyatoSoler21}. We will introduce them when presenting the associated large population results (Section \ref{sec:MFLsparse}). 
 \begin{figure}
     \centering
     \begin{tabular}{ccc}
\includegraphics[width = 0.3\textwidth]{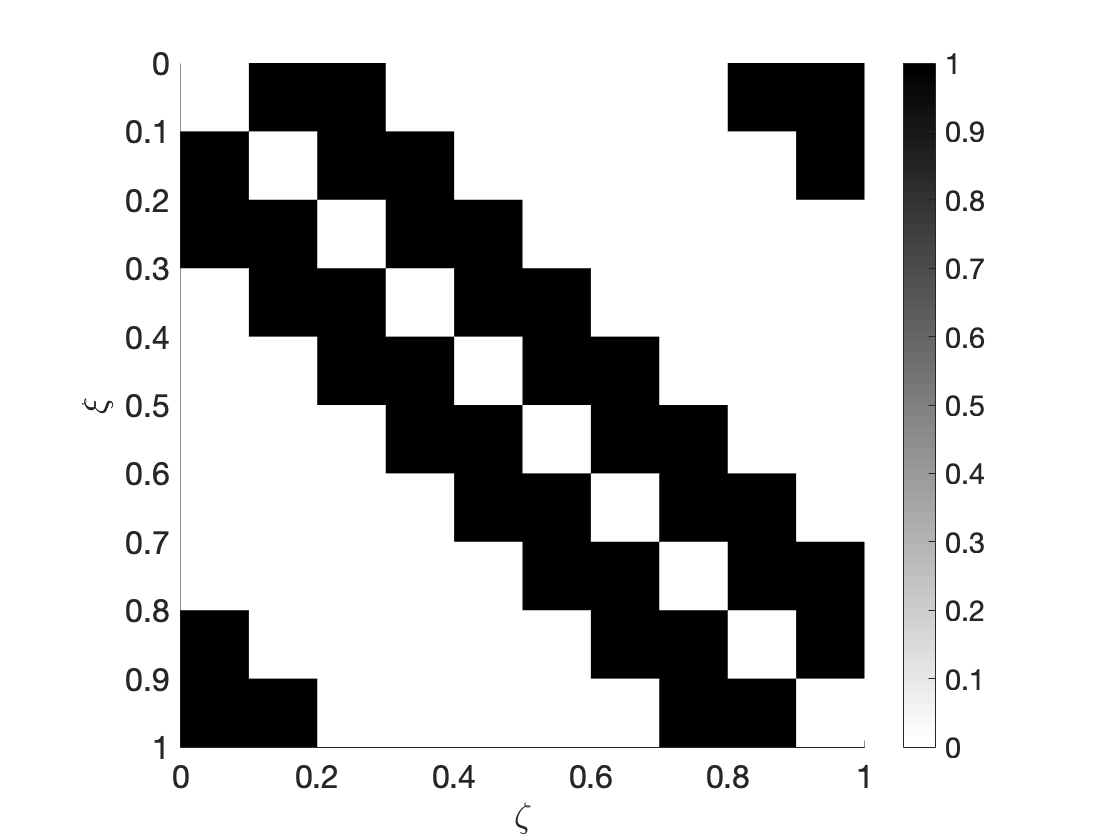} &
\includegraphics[width = 0.3\textwidth]{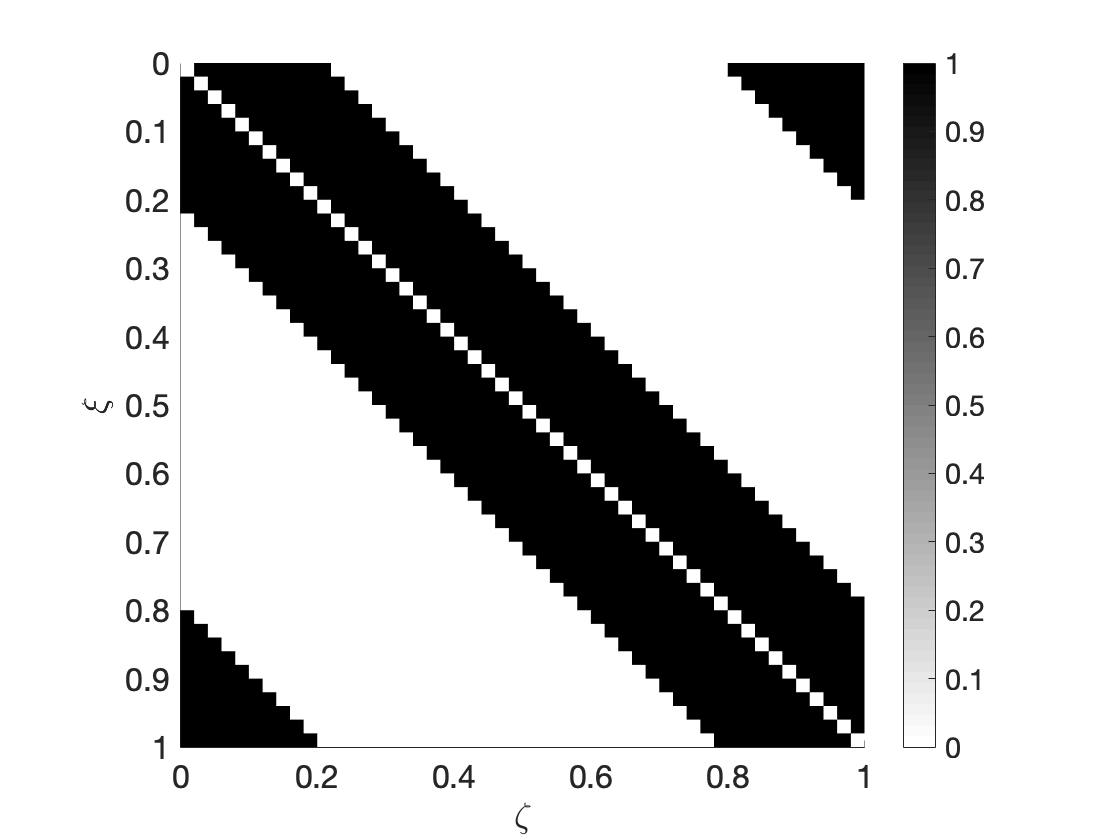} &
\includegraphics[width = 0.3\textwidth]{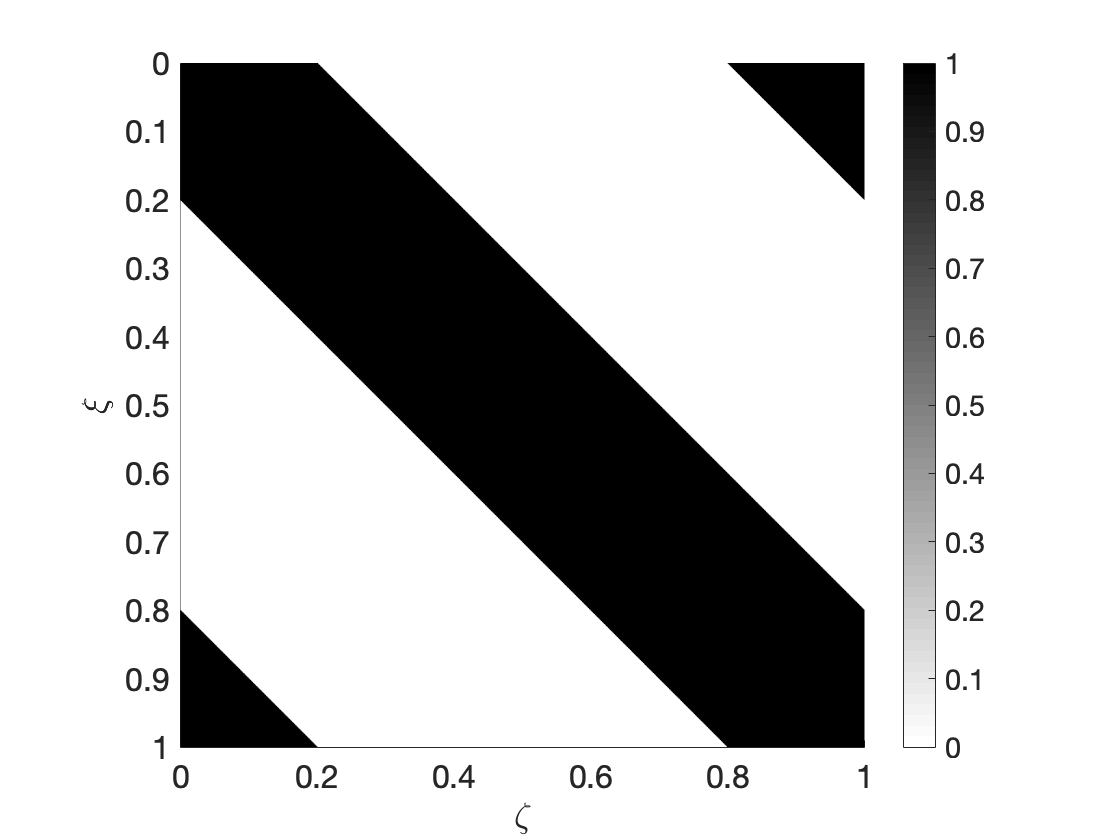} 
\end{tabular}
     \caption{Left and center: Pixel matrices of the graphs associated with \eqref{eq:Kuramoto_intro} for $N=10$ and $N=50$, with $k = \frac{N}{5}$. Right: Plot of the limit graphon $w$. }
     \label{fig:pixel_graphon}
 \end{figure}
 
\paragraph{The non-exchangeable mean-field limit and the continuum limit.}
If the interaction weights $(\w_{ij})_{i,j\in\setn}$ converge as $N$ tends to infinity to a limit object, which can be either a graphon $w\in L^\infty([0,1]^2)$ or a more general object (such as an extended graphon, a graphop, or a digraph measure),
then the microscopic system can be shown to converge weakly towards a measure $\mu\in C([0,T],\P([0,1]\times\R^d))$, solution to a Vlasov equation of the form:
\begin{equation}\label{eq:meanfield_nonexchangeable}
\begin{cases}
\dsp \partial_t \mu_t^\xi(x) + \nabla_x\cdot \left( \left( \int_{[0,1]} \int_{\R^d} w(\xi,\zeta) \phi(x,y) d\mu_t^\zeta(y) d\zeta \right) \mu_t^\xi(x) \right) = 0,\\
\mu_{t=0} = \mu_0
\end{cases} 
\end{equation}
in the graphon case. 
To differentiate this limit from the classical mean-field limit \eqref{eq:meanfield_exchangeable}, we will refer to Equation \eqref{eq:meanfield_nonexchangeable} as the \textbf{non-exchangeable mean-field limit} of System \eqref{eq:micro_nonexchangeable}.
Here, the variable $x\in \R^d$, as in the microscopic model \eqref{eq:micro_nonexchangeable}, represents the position (or phase, opinion, etc.), and the newly introduced variable $\xi\in [0,1]$ is a continuous representation of the particles' labels, or identities.
Notice in particular the asymmetric roles of of the two variables $x$ and $\xi$. 
The limit equation is a transport equation, in which the probability measure $\mu$ is transported only in the direction of the variable $x$, whereas the variable $\xi$ plays the role of a structure variable.
Due to its similarity with the previously mentioned mean-field limit, we will refer to Equation \eqref{eq:meanfield_nonexchangeable} as the \textit{non-exchangeable mean-field limit} of system \eqref{eq:micro_nonexchangeable}.
The non-exchangeable mean-field limit has been derived in various frameworks in \cite{ChibaMedvedev19, GkogkasKuehn22, KaliuzhnyiMedvedev18, KuehnXu22, JabinWang18, PT23}, presented in Section \ref{sec:Graphs_MFL}.

As shown in \cite{AyiPouradierDuteil23, Medvedev14, MedvedevR, PT23}, when the interaction weights $(\w_{ij})_{i,j\in\setn}$ converge as $N$ tends to infinity to a graphon $w\in L^\infty([0,1]^2)$,
the microscopic system \eqref{eq:micro_nonexchangeable} can also be shown to converge pointwise 
 towards the solution $x\in C([0,T]; L^\infty(\R^d))$ to an integro-differential Euler-type equation (also denoted \textit{nonlinear heat equation} in \cite{Medvedev14}): 
\begin{equation}\label{eq:GL_nonexchangeable}
\begin{cases}
\dsp \partial_t x(t,\xi) = \int_{[0,1]} w(\xi,\zeta) \phi(x(t,\xi),x(t,\zeta)) d\zeta \\
x(0,\cdot) = x_0.
\end{cases}
\end{equation}
This approach is referred to as \textit{continuum limit} or \textit{graph limit}, and is presented in Section \ref{sec:Graphs_GL}.

The relations between the two limit equations \eqref{eq:meanfield_nonexchangeable} and \eqref{eq:GL_nonexchangeable} are presented in Section \ref{sec:Link}.

\paragraph{Adaptive dynamical networks.}
Adaptive dynamical networks represent a broad class of interconnected dynamical systems, very useful to cover a wide range of real-life applications. Their main feature is that  the connectivity of the network evolves over time and that this evolution can depend on the  states of the system itself. For instance, in the context of opinion dynamics, nodes symbolize individual agents, while links mirror the myriad connections that we maintain in our social spheres, be it with friends, family, or colleagues. In that framework, the adaptive nature is to be understood as follows: not only are relationships likely to influence our opinions, but our opinions also exert a reciprocal effect, inducing alterations in the network structure (our relationships). An illustrative instance is found in the adaptive voter model (see for instance \cite{Zschaler_2012}), where agents update their opinions and connections. With a certain probability, each agent may adopt the viewpoint of interacting agents or, alternatively, shift their connections towards those who share more similar opinions. These intricate models offer a nuanced representation of reality, acknowledging that in numerous scenarios, networks are far from static. This not only holds true  as mentioned above for social interactions \cite{AyiPouradierDuteil21,BenPorat2023,McQuadePiccoliPouradierDuteil19,nugent2023evolving,PiccoliPouradierDuteil21,PouradierDuteil21,porat2023graph}, but extends to diverse domains such as neural and neuronal networks  \cite{Popovych,Rohr}, machine learning applications  \cite{LeCun,Rumelhart}, power grid models \cite{Nishikawa_2015}, and beyond. For a comprehensive exploration of various domains and examples, we direct readers to the review~\cite{rev_kuehn}.

As in \cite{rev_kuehn}, we will say that a network is \textit{adaptive} if the evolution of the edge $(i,j)$ explicitly depends on the states of the nodes $i$ and $j$. A counter-example is the blinking system in \cite{AyiPouradierDuteil23} for which every edge of the graph is reset at regular time intervals, independently of the states of the nodes at this time. To summarize, as mentioned in \cite{rev_kuehn}, there are two main classes for adaptive networks: 
\begin{itemize}
\item \textit{event-based adaptation}, where the network structure changes at certain discrete points in time and the triggers for the changes depend on the system itself,
\item \textit{continuous adaptation}.
\end{itemize}
In this review, we will exclusively focus on the second class. It can be presented in its general form as 
\begin{equation}
\begin{cases}
\displaystyle \frac{d}{dt} \x_i(t) = f_i(\x_i(t),t) +  \sum_{j=1}^N \w_{ij}(t) \phi \left(\x_i(t),\x_j(t),t\right) \quad \text{ for all } i\in\setn \\
\displaystyle \frac{d}{dt} w_{ij}^N(t) = h_{ij}(w^N(t),x^N(t),t)
\end{cases}
\end{equation}
where $\displaystyle w^N:=(w^N_{ij})_{i,j \in \setn}$, with $w_{ij}^N$ being the (now time-evolving) weight of the connection from node $j$ to $i$, and $x^N=(x_1^N, \dots, x_N^N)$.\\
The question of the large population limit, whether through the mean-field approach or the continuum limit one, is extremely challenging in its generality. Thus, to this date, only two very specific frameworks have successfully addressed this problem. The first one is the setting of Kuramoto-type models on $N$ oscillators \cite{gkogkas2023}. The second one can be seen as a variant of the Hegselmann-Krause dynamics \cite{HK} where, additionally to the opinions, we also are interested in the evolving-in-time weights of agents which represent their charisma, their popularity \cite{AyiPouradierDuteil21}.

We will present mean-field limits for those models in Section~\ref{mfl_adaptive} and continuum limits in Section~\ref{sec:GL_adaptive}. As in the static case, we will exhibit the links between the different limit equations in Section~\ref{Links2}.


\subsection{Notations and preliminary remarks}

In the last ten years, the surge of interest in large-population limits for non-exchangeable particle systems has led to a wealth of publications.
Consequently, due to the rapid growth of the field and to the variety of models considered, no unifying set of notations has emerged within the community.
In this review, we have decided to use the following notations: 
\begin{itemize}
    \item $x$ will denote a particle's state (which can represent an opinion, position, velocity or phase, as mentioned above). Note that it is also denoted by $x$ in \cite{AyiPouradierDuteil21,JabinPoyatoSoler21}, but it is denoted by $\xi$ in \cite{PT23}, by $u$ in \cite{AyiPouradierDuteil23, ChibaMedvedev19,GkogkasKuehn22,KaliuzhnyiMedvedev18,Medvedev14, MedvedevR}, by $\phi$ in \cite{KuehnXu22} and by $\theta$ in \cite{BetCoppiniNardi23}.
    \item $\Omega$ will denote the state space in which the particles evolve. We can distinguish between two main classes of results. Articles focusing on the Kuramoto model \eqref{eq:Kuramoto_intro} consider that $x$ represents an oscillator phase, and thus take the state space to be the torus $\Omega= \T := \R \slash (2 \pi \Z)$  
    \cite{ChibaMedvedev19,GkogkasKuehn22,gkogkas2023,gkogkas2023mean,KaliuzhnyiMedvedev18}. On the other hand, when the state is assumed to be a position, the state space is assumed to be $\Omega=\R^d$, for instance in \cite{AyiPouradierDuteil21,JabinPoyatoSoler21,PT23}. For presentation simplicity or technical reasons, one can also find $\Omega=\R$ in \cite{AyiPouradierDuteil23,Medvedev14,MedvedevR}.
    \item $\xi$ will denote a particle's label, or identity. It is also denoted by $\xi$ in \cite{JabinPoyatoSoler21}, but is denoted by $s$ in \cite{AyiPouradierDuteil21,BiccariKoZuazua19} and by $x$ in \cite{BetCoppiniNardi23,GkogkasKuehn22,KuehnXu22,Medvedev14, MedvedevR, PT23}.
    \item $I$ will denote the space to which the (continuous) label $\xi$ belongs. Most often, the discrete label $i \in \setn$ will be mapped to the continuous label $\xi$ by the transformation $i\mapsto\frac{i}{N}$, so that $I=[0,1]$ \cite{AyiPouradierDuteil21,AyiPouradierDuteil23,ChibaMedvedev19,KaliuzhnyiMedvedev18,Medvedev14,MedvedevR}. In other works, the label set is taken to be a more general multi-dimensional set, denoted $X$ \cite{KuehnXu22} or $\Omega$ \cite{GkogkasKuehn22,PT23}. To keep the presentation as clear as possible, we chose to state all results in the framework $I=[0,1]$, but will mention the possible extensions to more general sets when applicable.
    \item $\phi$ will denote the interaction function, as in \cite{AyiPouradierDuteil21,JabinPoyatoSoler21}. It is also denoted by $g$ in \cite{KuehnXu22}, and by $D$ in \cite{GkogkasKuehn22,KaliuzhnyiMedvedev18,Medvedev14,MedvedevR}.
    Note that in many applications (such as the Kuramoto model mentioned above, see Equation \eqref{eq:Kuramoto_intro}), $\phi:(x,y)\mapsto \tilde\phi(y-x)$ for some function $\tilde\phi$.
    \item $(\w_{ij})_{i,j\in\setn}$ will denote the interaction weights as in \cite{JabinPoyatoSoler21,Medvedev14}. The weight $\w_{ij}$ is denoted by $W_{N,ij}$ in \cite{KaliuzhnyiMedvedev18}, by $a_{ij}$ in \cite{KuehnXu22}, by $A_{ij}$ in \cite{GkogkasKuehn22}, by $\xi^N_{ij}$ in \cite{MedvedevR}.
\end{itemize}

In order to keep our presentation as clear as possible given the variety of existing frameworks, we have made the choice to present some results in a simplified formalism. We encourage the reader to consult the more general statements in the corresponding original articles.


\section{Large-population limits of non-exchangeable particle systems on static graphs}

Let $T>0$ and $x_i^{N,0}\in\Omega^N$. Let $(\x_i)_{i\in\setn}\in C([0,T];\Omega^N)$ represent a system of non-exchangeable interacting particles satisfying the system of coupled equations
\begin{equation}\label{eq:micro1}
\begin{cases}
\displaystyle \frac{d}{dt} \x_i(t) = \frac{1}{N}  \sum_{j=1}^N \w_{ij} \phi(\x_j(t)- \x_i(t)) \quad \text{ for all } i\in\setn, \; t\in [0,T] \\
\x_i(0) = x_i^{N,0}\quad \text{ for all } i\in\setn.
\end{cases}
\end{equation}
\begin{itemize}
\item The set $\setn$ represents the set of \textit{labels} of the interacting particles, i.e. their identities. Two distinct particles cannot have the same label.
\item For each $t\in [0,T]$, $\x_i(t)\in \Omega$ represents the state of the particle with label $i$ at time $t$. Depending on the model, it might represent an opinion, a position (in which case $\Omega=\R^d$), a phase ($\Omega=\T$) etc. For simplicity, from here onwards, we will refer to $\x_i(t)$ as the particle's \textit{position}.
\item The interaction function $\phi\in \Lip(\Omega^2)$ models the spatial interaction between any two particles. It will be considered to be a Lipschitz function.
\item $(\w_{ij})_{i,j\in\setn}$ are interactions weights, modeling an underlying interaction network. We suppose that for every $(i,j)\in\N^2$, $\w_{ij}\in\R$. Depending on the framework, it will also sometimes be assumed that the matrix $(\w_{ij})_{i,j\in\setn}$ is symmetric, corresponding to an undirected graph, and that $\w_{ij}\in\R_+$. These assumptions will be made clear when necessary.
\end{itemize}

Under these hypotheses, there exists a unique solution $(\x_i)_{i\in\setn}\in C([0,T]; \Omega^N)$ to \eqref{eq:micro1} (see \cite{PT23} for the proof under more general hypotheses).

\subsection{Mean-field limit} \label{sec:Graphs_MFL}


The question of the mean-field limit of system \eqref{eq:micro1} has been the focus of several works in the recent years \cite{ChibaMedvedev19,KaliuzhnyiMedvedev18,Kuehn20,JabinPoyatoSoler21,PT23}. 
The main requirement is that as $N$ tends to infinity, $(\w_{ij})_{i,j\in\setn}$ has a limit, which can be either a graphon $w\in L^\infty(I\times I)$, or a  more general object. 
Then, the microscopic system can be shown to converge (in a certain sense to be specified) towards a measure 
$\mu$, solution to a partial differential equation of the form
\begin{equation}\label{eq:meanfield}
\begin{cases}
\dsp \partial_t \mu_t^\xi(x) + \nabla_x\cdot \left( \left( \int_{I} \int_{\Omega} w(\xi,\zeta) \phi(x,y) d\mu_t^\zeta(y) d\zeta \right) \mu_t^\xi(x) \right) = 0,\\
\mu_{t=0} = \mu_0,
\end{cases} 
\end{equation}
in which the variable $x\in \Omega$, as in the microscopic model \eqref{eq:micro1}, represents the position (or phase, or opinion), and the newly introduced variable $\xi\in I$ is a continuous representation of the particles' labels, or identities.

Notice in particular the asymmetric roles of of the two variables $x$ and $\xi$. 
The limit equation is a transport equation, in which the probability measure $\mu$ is transported only in the direction of the variable $x$, whereas the variable $\xi$ plays the role of a structure variable. As announced in the introduction, we refer to this limit process as the \textbf{non-exchangeable mean-field limit} of System~\ref{eq:micro1}.

The interpretation of $\mu$ can be twofold. Let $\Delta \xi\subset I$ and $\Delta x\subset \R^d$. 
\begin{itemize}
\item In the probabilistic setting, $\mu_t(\Delta \xi \times \Delta x)= \int_{\Delta \xi\times \Delta x}d\mu_t^\xi(x) d\xi$ represent the probability of finding an agent with label in $\Delta \xi$ and position in $\Delta x$ at time $t$.
\item In the deterministic setting, $\mu_t(\Delta \xi \times \Delta x)$ represents the mass of agents with labels in $\Delta \xi$ and positions in $\Delta x$ at time $t$.
\end{itemize}

Note that proving existence and uniqueness of the solution to \eqref{eq:meanfield} is not always trivial. 
However, since it is not the main focus of this review, we do not go into details, and refer the reader to the cited articles for the proof in each of the frameworks that we mention below. 

The first results for non-exchangeable mean-field limit theory appeared in the framework of \textit{dense} graphs (Section \ref{sec:MFLdense}). These results were then extended to graphs with intermediate or sparse densities \ref{sec:MFLsparse}.

\subsubsection{Dense graphs} \label{sec:MFLdense}

For $N\in \N$, let $\langle G_N, V(G_N), E(G_N)\rangle$ denote a graph of $N$ nodes, whose set of vertices and set of edges are respectively given by $V(G_N)$ and $E(G_N)$.
A sequence of graphs $(G_N)_{N\in\N}$ is called ``dense'' \cite{Lovasz12,LovaszSzegedy06} if the number of edges each graph contains is proportional to the maximal number of edges it can contain, i.e. if
\[
|E(G_N)| = O(|V(G_N)|^2)= O(N^2).
\]
The limit of a convergent dense graph sequence is given by a so-called \textit{graphon} \cite{Lovasz12,LovaszSzegedy06}. Although definitions of graphons vary depending on the context in which they are introduced, in this review we will use the following notion: 
\begin{definition}\label{def:graphon}
A \textit{graphon} is a function $w\in L^\infty(I^2;\R)$.
\end{definition}

\paragraph{Lipschitz graphon.} In the seminal paper \cite{ChibaMedvedev19}, Chiba and Medvedev derive the mean-field limit of the microscopic system \eqref{eq:micro1} with the following regularity assumption: 
\begin{hyp}[Graphon regularity]\label{hyp:wLip}
The graphon $w\in \Lip(I^2;\R)$ is a Lipschitz function.
\end{hyp}
The proof is provided in the specific case of the Kuramoto model, in which the variables $(x_i)_{i\in\setn}$ represent oscillator phases, and evolve on the torus $\Omega=\mathbb{T}$. However, it can easily be extended to the case of particles in $\R^d$.
For each $N\in \N$, we introduce a discretization $(\xi^N_1,\cdots,\xi^N_N)\in I^N$ of the unit interval, with the requirement that for all continuous function $f\in C(I)$, the Riemann sum of $f$ evaluated at $(\xi_i^N)_{i\in\setn}$ converge to the integral of $f$ on $I$, i.e.
\[
\lim_{N\rightarrow\infty} \frac{1}{N} \sum_{i=1}^N f(\xi_i^N) = \int_I f(\xi) d\xi.
\]
Given such a discretizations, each discrete weights $\w_{ij}$ is defined by evaluating the continuous graphon $w$ at the corresponding discretization points: \begin{equation}\label{eq:wijpointwise}
    \w_{ij} := w(\xi^N_i,\xi_j^N).
\end{equation}
Then, the empirical measure associated with a solution $(\x_i)_{i\in\setn}$ to the microscopic system \eqref{eq:micro1} with such interaction weights is defined for all $t\in [0,T]$ by 
\begin{equation}\label{eq:empirmu}
\mu^N_t(\xi,x) := \frac{1}{N} \sum_{i=1}^N \delta_{\x_i(t)}(x) \delta_{\xi_i^N}(\xi).
\end{equation}
Importantly, probability measures in $\P(I\times\Omega)$ are compared via the \textit{Bounded Lipschitz distance $\dbl$ on the product space $I\times\Omega$} , defined for all $\mu_0,\nu_0\in\P(I\times\Omega)$ by 
\[
 \dbl(\mu_0,\nu_0) := \sup\left\{ \int_{I\times\Omega} f(\xi,x) d(\mu_0(\xi,x)-\nu_0(\xi,x))\, | \, f\in \Lip(I\times\Omega), \,  \|f\|_{\Lip}\leq 1, \,  \|f\|_{L^\infty}\leq 1\right\}.
\] 
One can show that given Hyp. \ref{hyp:wLip}
for any $T\in\R_+$, there exists a unique weak solution $\mu\in C([0,T],\P(I\times\Omega))$ to the mean-field equation \eqref{eq:meanfield}. The proof relies on the theory of Neunzert developed for the Vlasov equation \cite{Neunzert78, Neunzert84}, consisting in writing $\mu_t$ as the push-forward of $\mu_0$ via the flow of the vector field $\tilde V[\mu_t](\xi,x):=\left(V[\mu_t](\xi,x), \, 0 \right)$, where
\begin{equation}\label{eq:V}
V[\mu_t](\xi,x):=\int_I \int_{\R^d} w(\xi,\zeta) \phi(x,y) d\mu_t^\zeta(y) d\zeta,
\end{equation}
and in showing the existence and uniqueness of the solution to the associated fixed-point equation.
Moreover, one can show continuity of the solution to \eqref{eq:meanfield} with respect to the initial data in the Bounded Lipschitz distance. This implies the following convergence result:
\[
 \dbl(\mu^N_0,\mu_0)\xrightarrow[N\rightarrow\infty]{} 0 \; \Rightarrow \; \dbl(\mu^N_t,\mu_t)\xrightarrow[N\rightarrow\infty]{} 0, 
\]
where $\mu_t$ is the solution to 
\begin{equation*}
\begin{cases}
\dsp \partial_t \mu_t^\xi(x) + \nabla_x\cdot \left( \left( \int_{I} \int_{\Omega} w(\xi,\zeta) \phi(x,y) d\mu_t^\zeta(y) d\zeta \right) \mu_t^\xi(x) \right) = 0,\\
\mu_{t=0} = \mu_0.
\end{cases} 
\end{equation*}

\paragraph{Non-Lipschitz graphon.} In \cite{KaliuzhnyiMedvedev18}, Kaliuzhnyi-Verbovetsky and Medvedev extend the previous result to non-Lipschitz graphons.
The main idea for this improvement is to exploit the fundamental asymmmetry in the roles played by the variables $x$ and $\xi$.
More specifically, if $\mu_0$ is absolutely continuous with respect to the variable $\xi$, one can show that the measure $\mu_t$ remains absolutely continuous with respect to $\xi$ for all time (as shown in \cite{ChibaMedvedev19}). This allows to rewrite 
$
d\mu_0(\xi,x) = d\mu_0^\xi(x)d\xi,
$
where for all $\xi\in [0,1]$, $\mu_0^\xi\in \P(\Omega)$.
In this way, we consider the set of $\P(\Omega)$-valued functions $\bar{\mathcal{M}}:=\{\mu:I\rightarrow \P(\Omega)\}$ equiped with the $L^1$-Bounded-Lipschitz distance 
\[
\bar{d}(\mu_0,\nu_0) = \int_I \dbl(\mu_0^\xi, \nu_0^\xi) d\xi,
\]
which is a complete metric space.
The Lipschitz assumption on the graphon $w$ can then be relaxed to the much weaker condition
\begin{hyp}[Graphon regularity] The graphon $w\in L^\infty(I^2)$ satisfies
\[\lim_{\delta\rightarrow 0} \int_I |w(\xi+\delta,\zeta)-w(\xi,\zeta)| d\zeta = 0.\]
\end{hyp}
With this assumption, one can show that the vector field $V$ defined by \eqref{eq:V} is Lipschitz continuous in $x$, continuous in $\xi$, and Lipschitz continuous with respect to $\mu_t$. This implies that the equation of characteristics 
$
\dot x(t) = V[\mu_t](\xi,x(t))
$
is well-posed, and so that it generates a flow $\Psi_{t,0}$ defined for almost all $\xi\in I$ by $\Psi_{t,0}^\xi[w, \mu]u(0) = x(t)$.
Then, it can be shown that the fixed point equation 
$
\mu_t = \Psi_{0,t}[w,\mu]\#\mu_0
$
has a unique solution $\mu\in C([0,T],\bar{\mathcal{M}})$, which in turn implies existence and uniqueness of the solution to the mean-field equation \eqref{eq:meanfield}.
Moreover, the solution to \eqref{eq:meanfield} is continuous with respect to its initial data in the $L^1-\BL$ distance $\bar{d}$, and is continuous with respect to the graphon $w$ in the $L^1$-norm.

The link between the microscopic system \eqref{eq:micro} and the mean-field equation \eqref{eq:meanfield} is done via the empirical measure $\nu_{n,m}\in C([0,T],\bar{M})$, defined for all $N := nm$, for all $\xi\in [\frac{i-1}{n},\frac{i}{n})$, for all $t\in [0,T]$ by
\begin{equation}\label{eq:empirnu}
\nu^\xi_{n,m,t}(x) := \frac{1}{m}\sum_{j=1}^m \delta_{\x_{(i-1)m+j}(t)}(x),
\end{equation}
where $(\x_i)_{i\in\setn}$ is the solution to system \eqref{eq:micro1} in which the discrete weights $(\w_{ij})_{i,j\in\setn}$ are given by an $L^1$-approximation of the graphon $w$:
\begin{equation}\label{eq:wij}
\w_{ij} := N^2 \int_{\frac{i-1}{N}}^{\frac{i}{N}}\int_{\frac{j-1}{N}}^{\frac{j}{N}} w(\xi,\zeta) d\xi d\zeta.
\end{equation}
Then, the empirical measure $\nu_{n,m,t}$ can be shown to converge towards the solution $\mu_t$ to the mean-field equation \eqref{eq:meanfield}, in the $L^1$-Bounded-Lipschitz distance. 

\paragraph{General interaction function.}
In \cite{PT23}, Paul and Tr\'elat provide a general result for particle systems of the form 
\begin{equation}
    \begin{cases}
\displaystyle \frac{d}{dt} \x_i(t) = \frac{1}{N}  \sum_{j=1}^N G(t,\frac{i}{N},\frac{j}{N},\x_i(t),\x_j(t)) \quad \text{ for all } i\in\setn, \; t\in [0,T] \\
\x_i(0) = x_i^{N,0}\quad \text{ for all } i\in\setn,
\end{cases}
\end{equation}
in which the effect of the particles' labels is no longer decoupled from that of the particles' positions.
The function 
\[
\begin{array}{rcl}
     G :\R\times I\times I \times \R^d \times \R^d &\rightarrow & \R^d\\
     (t,\xi,\zeta,x,y) & \mapsto & G(t,\xi,\zeta,x,y)
\end{array}
\]
is assumed to be locally Lipschitz with respect to $(x,y)$ uniformly with respect to $(t,\xi,\zeta)$ on any compact subset of $\R\times I\times I$.
 Convergence can be proven to the following Vlasov equation:
\begin{equation}\label{eq:meanfield_PaulTrelat}
\begin{cases}
\dsp \partial_t \mu_t^\xi(x) + \nabla_x\cdot \left( \left( \int_{I\times\R^d} G(t,\xi,\zeta,x,y) d\mu_t^\zeta(y) d\zeta \right) \mu_t^\xi(x) \right) = 0,\\
\mu_{t=0} = \mu_0.
\end{cases} 
\end{equation}
Existence and uniqueness of the weak solution $\mu\in C([0,T];\P_c(I\times\R^d))$ to \eqref{eq:meanfield_PaulTrelat} is proven for all compactly supported initial data $\mu_0\in \P_c(I\times\R^d)$.
Moreover, the link between the discrete and the continuous system is provided by introducing an empirical measure
\begin{equation}\label{eq:empiricalmeasure_PaulTrelat}
    \mu_t^{\xi,N}(x) := \frac{1}{N} \sum_{i=1}^N \delta_{\frac{i}{N}}(\xi) \delta_{x_i(t)}(x).
\end{equation}
Then, denoting by $W_p$ the $p$-Wasserstein distance on the product space $I\times \R^d$, one has the following result for all $p\geq 1$: 
\[ W_p(\mu_0,\mu^N_0)\xrightarrow[]{N\rightarrow \infty} 0 \quad \Rightarrow \quad W_p(\mu_t,\mu^N_t)\xrightarrow[]{N\rightarrow \infty}0
\]
on all compact time interval $[0,T]$. 

Moreover, if $G$ is assumed to be locally Lipschitz with respect to all four arguments $(\xi, \zeta, x, y)$, a stability estimate for \eqref{eq:meanfield_PaulTrelat} allows to write
\[ \forall t\in [0,T], \quad  W_p(\mu_t,\mu^N_t)\leq C_{\mu,\mu^N}(t) W_p(\mu_0,\mu^N_0),
\]
where the constant $C_{\mu,\mu^N}(t)$ depends on the Lipschitz norm of $G$ with respect to $x$ and $y$ on the supports of $\mu_t$ and $\mu_t^N$.

Note that if the system is posed on a graph, i.e. $G(t,\xi,\zeta,x,y):= w(\xi,\zeta) \phi(x,y)$, this strong Lipschitz condition on $G$ implies that the graphon $w$ is Lipschitz, as in \cite{ChibaMedvedev19}. 

 \begin{rem}
    In its full generality, the result from \cite{PT23} is stated for a general $n$-dimensional set $I$, and the Lebesgue measure $d\xi$ in \eqref{eq:meanfield_PaulTrelat} can be replaced by a more general measure $d\nu(\zeta)$. The final time of existence $T$ is not necessarily finite.
\end{rem}

\paragraph{A priori unknown graphon.}
The previous three results \cite{ChibaMedvedev19,KaliuzhnyiMedvedev18,PT23} are all based on a same general idea: a limit graphon $w$ is given, and used to build a converging graph sequence by discretizing it in an appropriate way (see for instance \eqref{eq:wijpointwise} or \eqref{eq:wij}). The convergence of the microscopic system \eqref{eq:micro1} towards the limit Vlasov equation is obtained as a consequence of this discretization procedure. 
This ``top-down'' process can be argued to be somewhat artificial, since the graph sequence is built using the \textit{a priori} knowledge of the limit graphon.

In \cite{BetCoppiniNardi23}, Bet, Coppini and Nardi propose a different approach, with no \textit{a priori} knowledge of the limit object,
for the random system on a graph $G_N$
\begin{equation}\label{eq:micro_BCN}
    \begin{cases}
 \dsp   dX_t^{i,N} = \frac{1}{N} \sum_{i=1}^N \w_{ij} \phi(X^{i,N}_t,X^{j,N}_t) dt + dB^i_t \\
    X_0^{i,N} = X^i_0,
    \end{cases}
\end{equation}
where $(B^i)_{i\in\setn}$ is a sequence of independent and identically distributed (i.i.d.) Brownian motions on $\Omega:=\T$, the initial conditions $(X^i_0)_{i\in\setn}$ are i.i.d.  sampled from some probability distribution $\bar\mu_0$, and the weights $(\w_{ij})$ are the edge weights of the graph $G_N$.

The main tool of this approach is the cut-norm $\|\cdot\|_\square$ (see Equation \eqref{eq:cut-norm}), well-known to the Graph Theory community \cite{Lovasz12,LovaszSzegedy06}.
To prove convergence of the microscopic system with noise \eqref{eq:micro_BCN}, one needs to consider the cut-distance between two graphons, taking into consideration all possible relabelings, defined by
\[
\delta_\square(w,\tilde{w}) = \min_{\varphi\in S_I} \|w-\tilde{w}^\varphi\|_\square,
\]
where $S_I$ denotes the space of invertible measure-preserving maps $\varphi$ from $I$ to $I$, and where $\tilde{w}^\varphi:(\xi,\zeta)\mapsto \tilde{w}(\varphi(\xi), \varphi(\zeta))$.
Importantly, in this setting, the cut-distance between two graphons $w$ and $\tilde{w}$ can be zero for two different graphons. The important point is that the graphons be equal up to relabeling.

In this framework, the microscopic system is linked to its mean-field limit by the traditional empirical measure $\mu_t^N\in \P(\Omega)$ defined as in \eqref{eq:empirical_measure_exchangeable_intro} by
\begin{equation}
    \mu^N_t := \frac{1}{N} \sum_{i=1}^N \delta_{\x_i(t)}.
\end{equation} 
One also needs to define the non-linear process
\begin{equation}\label{eq:nonlinearprocess}
\begin{cases}
 \dsp   X_t = X_0 + \int_0^t \int_I \int_{\Omega} w(U,\zeta) \phi(X_s,y) d\mu_s^\zeta(y) d\zeta ds+ B_t\\
    \mu_t^y = \mathcal{L} ( X_t|U=\zeta) \quad \text{for } \zeta\in I, t\in [0,T],
\end{cases}
\end{equation}
where $\mathcal{L}(X_0)=\bar\mu_0$, and $B_t$ is a Brownian motion.
 
The main result can then be stated as follows. Assume that $\phi\in C^{1+\varepsilon}(\Omega)$ for some $\varepsilon>0$. Consider a sequence of graphs $G_N$, whose associated graphons $w_{G_N}$ defined by 
\[
w_{G_N} :(\xi,\zeta)\mapsto  \sum_{i=1}^N \sum_{j=1}^N \w_{ij} \mathbf{1}_{[\frac{i-1}{N},\frac{i}{N})}(\xi)  \mathbf{1}_{[\frac{j-1}{N},\frac{j}{N})}(\zeta) 
\]
converges in cut-norm to an \textit{a priori} unknown limit graphon in probability, in the sense that 
\[
\lim_{N\rightarrow+\infty} \mathbb{E} [ \delta_\square(w_{G_N}, w)] = 0.
\]
Note that the limit graphon $w$ is not unique, since any relabeling of $w$ would also be a limit of $w_{G_N}$ in the cut-distance. For this reason, $w$ is refered to as an \textit{unlabeled graphon} in \cite{BetCoppiniNardi23}.
Then, the empirical measures $\mu^N$ converge to a limit measure $\bar\mu\in \P(C([0,T],\Omega))$, which is the weak solution to the non-linear Fokker-Plank equation
\begin{equation}
    \begin{cases}
\dsp \partial_t \bar\mu_t(x) + \nabla_x\cdot \left( \int_I \left( \int_{I} \int_{\Omega} w(\xi,\zeta) \phi(x,y) d\mu_t^\zeta(y) d\zeta \right) \mu_t^\xi(x) d\xi \right) = \frac{1}{2} \Delta_x \bar\mu_t(x),\\
\bar\mu_{t=0} = \bar\mu_0,
\end{cases} 
\end{equation}
in which the measure $\mu_t^\xi$ is defined by the non-linear process \eqref{eq:nonlinearprocess}.

We refer the reader to \cite{BetCoppiniNardi23,BhamidiBudhijaratWu19,DelattreGiacominLucon16} for more details on such probabilistic approaches.

\subsubsection{Sparse graphs}\label{sec:MFLsparse}

As seen in the Introduction, the graph sequence corresponding to Example \eqref{eq:Kuramoto_intro_2} admits a graphon limit when $k$ increases proportionally to $N$ as $N$ tends to infinity. What can be said when $k$ is not proportional to $N$?

Recent results in Graph Theory have provided multiple ways of defining the limit of a graph sequence which is not dense. These new limit objects include $L^p$-graphons, graphops, graphings, and measures \cite{BackhauszSzegedy22,BorgsChayesCohnZhao19,KunszentiLovaszSzegedy19}. We refer to \cite{JabinPoyatoSoler21} for an illustration of the relationships between the various limit objects.

\paragraph{Graphop.}
The result of \cite{KaliuzhnyiMedvedev18} is further extended by Gkogkas and Kuehn in \cite{GkogkasKuehn22} to a class of sparse graphs defined using so-called ``graphops'' (graph operators).
Graphops were introduced in~\cite{BackhauszSzegedy22} by Backhausz and Szegedy in the aim of providing a general framework unifying dense and sparse graph theories.  
The main idea consists of moving away from the object of graphons (i.e. functions $w\in L^\infty(I^2)$) and of considering instead the action of graphs as operators from $L^\infty(I)$ to $L^1(I)$.
\begin{definition}
A graphop is a linear operator $A:L^\infty(I)\rightarrow L^1(I)$ 
which has finite operator norm $\|A\|_{\infty\rightarrow1}$, is positivity-preserving, and is self-adjoint. 
\end{definition}
Importantly, 
there exists a family $(\nu_A^\xi)_{\xi\in I}$ of finite fiber measures so that the action of the graphop $A$ on a function $f\in L^\infty(I)$ is given by: 
\[
Af:\xi \mapsto \int_I f(\zeta) d\nu_A^\xi(\zeta).
\]
This object generalizes the concept of symmetric graphons in the following way: for every symmetric graphon $w\in L^\infty(I^2)$, one can define a graphop $A_w:L^\infty(I)\rightarrow L^1(I)$ such that for all $f\in L^\infty(I)$, 
\[
A_wf:\xi \mapsto \int_I w(\xi,\zeta) f(\zeta) d\zeta.
\]
Consider then a sequence of graphons $(w_K)_{K\in\N}$ such that their associated graphops $(A_{w_K})_{K\in\N}$ converge as $K$ goes to infinity to a limit graphop $A$, in the sense that for almost all $\xi\in I$, $\nu^\xi_{A_K}\rightharpoonup \nu^\xi_{A}$.
As shown in \cite{GkogkasKuehn22}, one can derive the mean-field limit of the solution $x^{N,K}$ of the microscopic system~\eqref{eq:micro1}
in which the weights $w^{N,K}_{ij}$ are given by an $L^1$ approximation of $w_K$ as in \eqref{eq:wij}, given the necessary regularity condition:
\begin{hyp}[Graphop regularity]
For all $\xi,\xi_0\in I$, if $\xi\rightarrow\xi_0$, then $\nu^{\xi}_{A}\rightharpoonup \nu^{\xi_0}_{A}$.
\end{hyp}
The empirical measure $\nu_{n,m,K,t}$ defined as in \eqref{eq:empirnu} from $x^{N,K}$ can then be shown to converge towards $\mu^\xi_t$, defined for all $S\in\mathcal{B}(\R)$ by $\mu^\xi_t(S) := \int_S \rho(t,\xi,x)dx$, where $\rho$ is the solution to the mean-field equation
\[
\displaystyle \partial_t \rho(t,\xi,x) + \nabla_x\cdot \left(\rho(t,\xi,x) \left(\int_{\Omega} \phi(x,y) (A\rho)(t,\xi,y)dy \right) \right) = 0, 
\]
in the $L^1$-Bounded-Lipschitz distance.

\begin{rem}
In \cite{GkogkasKuehn22}, the proof is done on $\Omega=\mathbb{T}$ for the Kuramoto model, but can be easily extended to $\R$.
Moreover, the space of nodes can be extended from $(I,d\xi)$ with $I=[0,1]$ as exposed here, to a more general $(\tilde I,dm(\xi))$, with $\tilde I\subset \R^n$.
\end{rem}

\begin{rem}
As explained above, this approach requires the converging sequence of graphons to be symmetric (i.e. the corresponding graphs to be undirected).
\end{rem}

\paragraph{Digraph measures.} In \cite{KuehnXu22}, Kuehn and Xu further generalize the result of \cite{GkogkasKuehn22} to sparse directed graphs (also called ``digraphs''), using the framework of \textit{digraph measures}. Denoting by $\mathcal{M}_+$ the set of finite Borel positive measures, one can define digraph measures as everywhere-defined in the first variable, bounded measures in the second variable: 
\begin{definition}
Any measure-valued function $\eta\in \mathcal{B}(I,\mathcal{M}_+(I))$ is a \textit{digraph measure}.
\end{definition}
Convergence is shown in the \textit{uniform bounded Lipschitz metric} $d_\infty$, defined for all $\mu_1$, $\mu_2\in \mathcal{B}(I,\mathcal{M}_+(\R^d))$ by 
\[
d_\infty(\mu_1,\mu_2) := \sup_{\xi\in I} \dbl(\mu_1^\xi,\mu_2^\xi).
\]
As in the works \cite{KaliuzhnyiMedvedev18} and \cite{GkogkasKuehn22}, a continuity assumption with respect to the first variable is required of the limit object (in this case, a digraph measure) to which the sequence of graphs converge: 
\begin{hyp}
$\eta\in \mathcal{C}(I,\mathcal{M}_+(I))$
\end{hyp}

Using this framework, a \textit{graphon} can be viewed as the digraph limit of a sequence of dense graphs, and a \textit{graphop} is a symmetric digraph measure.

The results presented in \cite{KuehnXu22} apply to general compact label sets $I$ (not necessarily $[0,1]$), with a reference measure that can differ from the usual Lebesgue one, and in particular that can be discrete or singular. 
However, in order to be consistent with what has been presented above, we present the mean-field limit equation that is obtained in this simplified framework, which can be written as
\[
\displaystyle \partial_t \rho(t,\xi,x) + \nabla_x\cdot \left(\rho(t,\xi,x) \left(\int_I \int_{\Rd} \phi(x,y) \rho(t,\zeta,y)  dy d\eta^\xi(\zeta) \right) \right) = 0.
\]

\paragraph{Extended graphon.}
The approach proposed by Jabin, Poyato and Soler in \cite{JabinPoyatoSoler21} differs from the approaches in \cite{KaliuzhnyiMedvedev18}, \cite{GkogkasKuehn22} and \cite{KuehnXu22}, in that it requires no continuity of the limit object (in this new framework, called \textit{extended graphon}), which is defined by
\begin{definition}
An extended graphon is a measure $w\in L^\infty_\xi\mathcal{M}_\zeta\cap  L^\infty_\zeta\mathcal{M}_\xi$. 
\end{definition}
Importantly, similarly to the last paragraph of Section \ref{sec:MFLdense}, this limit procedure requires no \textit{a priori} knowledge of the limit of the discrete coupling weights $(\w_{ij})_{i,j\in\setn}$.
Consequently, instead of making assumptions on the limit extended graphon, assumptions are made on the discrete weights:
\begin{hyp}
\begin{itemize} The discrete weights $(\w_{ij})_{i,j\in\setn}$ satisfy:
\item $\dsp \max_{1\leq i\leq N} \sum_{j=1}^N |\w_{ij}| = O(1)$ and $\dsp \max_{1\leq j\leq N} \sum_{i=1}^N |\w_{ij}| = O(1)$ 
\item $\dsp \max_{1\leq i,j\leq N} |\w_{ij}| = o(1)$. 
\end{itemize}
\end{hyp}
Given such discrete weights, the microscopic system \eqref{eq:micro1} is shown to converge to a limit function $\mu\in L^\infty([0,T]\times I, W^{1,1}\cap W^{1,\infty}(\Omega))$ for any $T>0$, solution to 
\[
\dsp \partial_t \mu_t^\xi(x) + \nabla_x\cdot \left( \left( \int_I w(\xi,d\zeta) \int_{\Omega}  \phi(x,y) \mu_t^\zeta(dy) \right) \mu_t^\xi(x) \right) = 0,
\]
where $w\in L^\infty_\xi\mathcal{M}_\zeta\cap  L^\infty_\zeta\mathcal{M}_\xi$ is an \textit{extended graphon}. More precisely, convergence is obtained in the following sense: up to the extraction of a subsequence, 
\[
\lim_{N\rightarrow\infty} \sup_{0\leq t\leq T} \mathbb{E} W_1\left( \int_I \mu_t^\xi(\cdot)d\xi,\mu_t^N \right) = 0,
\]
where $W_1$ is the $1$-Wasserstein distance on $\Omega$ and the empirical measures $\mu^N_t$ are defined from the solution $\x$ to the discrete system \eqref{eq:micro1} by 
\begin{equation}\label{eq:empirmux}
\mu_t^N(x):= \frac{1}{N} \sum_{i=1}^N\delta_{\x_i(t)}(x).
\end{equation}
Notice that symmetry is required neither of the discrete weights $(\w_{ij})_{i,j\in\setn}$ nor of the extended graphon $w$, unlike in the \textit{graphop} approach \cite{GkogkasKuehn22}. However, contrarily to the digraph measure framework, a symmetric role is given to $\xi$ and $\zeta$ in the definition of the extended graphon $w\in L^\infty_\xi\mathcal{M}_\zeta\cap  L^\infty_\zeta\mathcal{M}_\xi$.

\subsection{Continuum limit} \label{sec:Graphs_GL}

The mean-field limit presented in Section \ref{sec:Graphs_MFL} provides \textit{weak} convergence of the microscopic system \eqref{eq:micro1} towards the solution to a transport equation \eqref{eq:MFL}. Another approach, denoted \textit{continuum limit} or \textit{graph limit}, provides a \textit{pointwise} convergence of the solution to \eqref{eq:micro1} towards the solution $x\in C([0,T]; L^\infty(\R^d))$ to an integro-differential Euler-type equation: 
\begin{equation}\label{eq:GL1}
\begin{cases}
\dsp \partial_t x(t,\xi) = \int_I w(\xi,\zeta) \phi(x(t,\xi),x(t,\zeta)) d\zeta \\
x(0,\cdot) = x_0,
\end{cases}
\end{equation}
also denoted by \textit{nonlinear heat equation} in \cite{Medvedev14}.

In the case of the mean-field limit, the link between the solution $\x$ to the microscopic system \eqref{eq:micro1} and the solution $\mu$ to the limit equation \eqref{eq:MFL} is done via a so-called \textit{empirical measure} (given for example in equations \eqref{eq:empirmu}, \eqref{eq:empirnu}, \eqref{eq:empirmux}). In the continuum limit framework, the link between the solution $x^N$ to \eqref{eq:micro1} and the solution $x$ to \eqref{eq:GL1} is provided by constructing a piecewise-constant function $x_N\in C([0,T]; L^\infty(\R^d))$ from $\x$: 
\begin{equation}\label{eq:discrete-to-continuous}
\forall \xi\in  I, \quad x_N(t,\xi) := \sum_{i=1}^N \x_i(t) \mathbf{1}_{[\frac{i-1}{N},\frac{i}{N})}(\xi).
\end{equation}
The crucial point consists of noticing that $x_N$ is solution to \eqref{eq:GL1} if and only if $\x$ is solution to the microscopic equation \eqref{eq:micro1} with the graphon $w^N$ given by 
\begin{equation}\label{eq:wn}
\forall (i,j)\in \setn^2, \quad \w_{ij} := \int_{\frac{i-1}{N}}^{\frac{i}{N}} \int_{\frac{j-1}{N}}^{\frac{j}{N}} w(\xi,\zeta) d\zeta d\xi.
\end{equation}

Another key difference with the mean-field limit approach is that all results currently published are in the context of \textit{dense graphs}, i.e. graphs whose limits are graphons $w\in L^\infty(I^2)$ (see Definition~\ref{def:graphon} in Section~\ref{sec:MFLdense}).
The available results can be divided into results for deterministic graphs and results for random graphs. 

\subsubsection{Continuum limit on deterministic graphs}
\label{sec:ContinuousLimitDet}

A seminal paper providing the first proof of convergence of the solution to the microscopic equation~\eqref{eq:micro1} towards its continuum limit \eqref{eq:GL1} was published in 2014 by Medvedev \cite{Medvedev14}. 
The main result can be stated as follows. Let $x\in C([0,T];L^\infty(\R^d))$ denote the solution to the integro-differential equation~\eqref{eq:GL1}
on a given graphon $w\in L^\infty(I^2)$ and with an initial condition $x_0\in L^\infty(I)$.
For each $N\in \N$, consider the solution $x^N$ to the microscopic system \eqref{eq:micro1} on the underlying graph whose adjacency matrix $\w$ is given by \eqref{eq:wn}
and the initial condition by 
\[
\forall i\in \setn, \quad x^{N,0}_{i} := \int_{\frac{i-1}{N}}^{\frac{i}{N}} x_0(\xi) d\xi.
\]
Let $x_N\in C([0,T];L^\infty(\R^d))$ denote the piecewise-constant function built from the vector $\x$ by \eqref{eq:discrete-to-continuous}. Then, $x_N$ converges to $x$ in $L^2-$norm, satisfying
\[
\lim_{N\rightarrow\infty} \sup_{t\in [0,T]} \| x(t,\cdot) - x_N(t,\cdot)\|_{L^2(I)} = 0.
\]
A more detailed explanation of this limit process is presented in Section \ref{sec:GL_adaptive}, in the case of a particle system with time-evolving weights.

Remarkably, this result requires no continuity of the graphon $w$.
However, as shown in \cite{PT23} (Theorem 4), if both the initial data and the graphon are regular enough, one can obtain a quantitative convergence result. 
More precisely, let $\alpha\in (0,1]$ such that $x_0\in C^{0,\alpha}(I;\R^d)$. Let $G:[0,T]\times I^2\times(\R^d)^2$ be locally Lipschitz with respect to its last two arguments uniformly with respect to the first three, and suppose in addition that $G$ is locally $\alpha$-H\"older with respect to  its last four arguments.
Then, the solution $x$ to the integro-differential equation
\begin{equation*}
\begin{cases}
\dsp \partial_t x(t,\xi) = \int_I G(t,\xi,\zeta,x(t,\xi),x(t,\zeta)) d\zeta \\
x(0,\cdot) = x_0,
\end{cases}
\end{equation*}
satisfies $x(t,\cdot)\in C^{0,\alpha}(I;\R^d)$ for all $t\in [0,T]$.
Moreover, let $x^N$ denote the solution to the particle system
\begin{equation*}
\begin{cases}
\displaystyle \frac{d}{dt} \x_i(t) = \frac{1}{N}  \sum_{i=1}^N G(t,\frac{i}{N}, \frac{j}{N},\x_i(t),\x_j(t)) \quad \text{ for all } i\in\setn \\
\dsp \x_i(0) = x_0(\frac{i}{N}).
\end{cases}
\end{equation*}
Then, for every $N\in\N$, it holds
\[
\max_{i\in\setn} |x(t,\frac{i}{N})-\x_i(t)| \leq \frac{1}{N^{\alpha}}(1+\mathrm{Hol}_\alpha(x_0))e^{2tL_x^N(t)},
\]
where $\mathrm{Hol}_\alpha(x_0)$ denotes the H\"older constant of $x_0$ and $L_x^N(t)$ 
depends on the H\"older constant of $G(\tau,\cdot,\cdot,\cdot,\cdot)$ and on the Lipschitz constant of $G(\tau,\xi,\zeta,\cdot,\cdot)$ for all $\tau\in [0,t]$.

Moreover, letting $x_N$ be the piecewise-constant function defined by \eqref{eq:discrete-to-continuous} from the solution $x^N$ to the microscopic system, it holds
\[
\|x(t,\cdot)-x_N(t,\cdot)\|_{L^\infty(I)} \leq \frac{2}{N^{\alpha}}(1+\mathrm{Hol}_\alpha(x_0))e^{2tL_x^N(t)}.
\]

\begin{rem}
In order to insist on the role of the regularity, we stated the result from Theorem 4 in \cite{PT23} in a simplified form. The full result, as stated in its most general form, only requires $I$ to be a general compact smooth $n$-dimensional manifold, and the final time $T\in \R_+\cup\{+\infty\}$ corresponds to the uniform maximal time until which the solution to the microscopic system is well-defined. We encourage the reader to consult \cite{PT23} for a complete statement.
\end{rem}


\subsubsection{Continuum limit on random graphs}\label{sec:ContinuousLimitRand}

\paragraph{Random unweighted graphs.}

In \cite{Lovasz12,LovaszSzegedy06}, Lovasz and Szegedy introduce a method to construct random graphs from a graphon $w$. Such random graphs, named $w$-random graphs, are unweighted, meaning that each edge is either present or absent, and consequently, 
the corresponding adjacency matrix $(w_{ij})_{i,j\in\setn}$ contains values in $\{0,1\}$. 
 Given a graphon $w\in L^\infty(I^2;[0,1])$, a $w$-random graph can be defined in two different ways, as proposed in \cite{MedvedevR}: either from a sequence of i.i.d. random variables, or from a sequence of deterministic evenly-spaced variables.
 Although these differences may seem subtle, they are indeed fundamental and the type of convergence one may hope to obtain depends on the degree of randomness (``random-random'' or ``random-deterministic'') introduced in the graph construction.
 \begin{definition} \label{def:wrandomgraph}
  Let $w\in L^\infty(I^2;[0,1])$. A $w$-random graph can be constructed either from a sequence of random variables (r-r) or from a sequence of deterministic variables (r-d):
 \begin{enumerate}
 \item[(r-r)] Let $Z=(Z_i)_{i\in\N}$ be a sequence of i.i.d. random variables, uniformly distributed in $I=[0,1]$. Let $N\in \N$. A random (unweighted) graph $G_N$ generated by the random sequence $Z$ is constructed by inserting each edge~$(i,j)$ with probability $w(Z_i,Z_j)$:
 \[\mathbb{P}[(i,j)\in E(G_N)] = w(Z_i,Z_j).\]
 The corresponding adjacency matrix is given by $(w_{ij})_{i,j\in\setn}$ with 
 \[
 w_{ij} = 
  \begin{cases}
  1 \text{ if } (i,j)\in E(G_N)\\
  0 \text{ otherwise.}
 \end{cases}
 \]
 \item[(r-d)] Let $N\in \N$. Let $\tZ=(\tZ^N_i)_{i\in\setn}$ be a determistic sequence satisfying $\tZ^N_i\in [\frac{i-1}{N} , \frac{i}{N})$ for all $i\in\setn$. A random (unweighted) graph $\tilde{G}_N$ generated by the deterministic sequence $\tZ$ is constructed by inserting each edge~$(i,j)$ with probability $w(\tZ^N_i,\tZ^N_j)$:
  \[\mathbb{P}[(i,j)\in E(\tilde{G}_N)] = w(\tZ^N_i,\tZ^N_j).\]
 The corresponding adjacency matrix is given by $(w_{ij})_{i,j\in\setn}$ with 
 \[
 w_{ij} = 
  \begin{cases}
  1 \text{ if } (i,j)\in E(\tilde{G}_N)\\
  0 \text{ otherwise.}
 \end{cases}
 \]
 \end{enumerate}
\end{definition}

Two convergence results can then be given, one in each of the settings (r-r) and (r-d).

Firstly, in \cite{MedvedevR}, Medvedev proves that given a symmetric graphon $w\in L^\infty(I^2,[0,1])$, a Lipschitz function $\phi:\R\rightarrow\R$ and $x_0\in L^\infty(I)$, if the solution $x\in C([0,T];L^\infty(I))$ to \eqref{eq:GL1} satisfies the inequality 
\[
\sup_{t\in [0,T]}\int_I\left(\int_I w(\xi,\zeta) \phi(x(t,\xi),x(t,\zeta))^2 d\zeta - \left(\int_I w(x,y) \phi(x(t,\xi),x(t,\zeta)) d\zeta\right)^2\right)d\xi \geq C_1
\]
for some constant $C_1>0$, then the solution $x^N$ to the microscopic system \eqref{eq:micro1} posed on the $w$-random graph generated by a random sequence (r-r) constructed as in Definition \ref{def:wrandomgraph} converges towards $x$. The convergence is obtained in the following sense: for some $C>0$,
\[
\lim_{N\rightarrow+\infty} \PP\left[ \sup_{t\in [0,T]} \|\x(t)-\mathbf{P}_{Z^N}x(t,\cdot)\|_{2,N} \leq C\right] = 1,
\]
where for all $a\in \R^N$,
$\|a\|_{2,N}:=\frac{1}{N}\sum_{i=1}^N a_i$, and 
 $\mathbf{P}_{Z^N}x(t,\cdot):=(x(t,Z_1),\cdots,x(t,Z_N))$ denotes the evaluation of the function $x(t,\cdot)\in L^\infty(I)$ at the discretization points $(Z_1,\cdots,Z_N)$. 

On the other hand, the convergence of the microscopic system on a $w$-random graph generated by a deterministic sequence (r-d) requires more regularity from the graphon $w$. More specifically, in \cite{MedvedevR}, Medvedev  proves that given a symmetric graphon $w\in L^\infty(I^2;[0,1])$ almost everywhere continuous, a Lipschitz function $\phi:\R\rightarrow\R$ and $x_0\in L^\infty(I)$, if the solution $x\in C([0,T];L^\infty(I))$ to \eqref{eq:GL1} satisfies the inequality 
\[
\sup_{t\in [0,T]} \int_I^2 w(\xi,\zeta) (1-w(\xi,\zeta))\phi(x(t,\xi)-x(t,\zeta)) d\xi d\zeta > 0
\] 
for some $T>0$, then defining $x_N$ from the solution $x^N$ to the microscopic system \eqref{eq:micro1} on the $w$-random graph (r-d) as by the relation \eqref{eq:discrete-to-continuous}, it holds
\[
\|x_N - x\|_{C([0,T];L^2(I))} \xrightarrow[N\rightarrow+\infty]{\mathbb{P}} 0,
\]
where the convergence is in probability. 

The proofs of both results reliy on several applications of the Central Limit Theorem, and, in the case (r-d), on the introduction of an intermediate deterministic system, known to converge due to results in \cite{Medvedev14} (See Section \ref{sec:ContinuousLimitDet}).

\paragraph{Random weighted graphs.}
A generalization of this convergence result to general directed weighted random graphs is provided in \cite{AyiPouradierDuteil23}.
In that aim, the concept of $w$-random graphs is generalized to that of $q$-weighted random graphs as follows:

\begin{definition} \label{def:qweightedrandomgraph}
  Let $q:I^2\rightarrow \P(\R_+)$. A $q$-weighted random graph can be constructed either from a sequence of random variables (r-r) or from a sequence of deterministic variables (r-d).
 \begin{enumerate}
 \item[(r-r)] Let $Z=(Z_i)_{i\in\N}$ be a sequence of i.i.d. random variables, uniformly distributed in $I=[0,1]$. Let $N\in \N$. A $q$-weighted random graph generated by the random sequence $Z$ is constructed by randomly attributing to each edge~$(i,j)$ a weight $w_{ij}\in\R_+$ with law $q(Z_i,Z_j,\cdot)$.
 \item[(r-d)] Let $N\in \N$. Let $\tZ=(\tZ^N_i)_{i\in\setn}$ be a determistic sequence satisfying $\tZ^N_i\in [\frac{i-1}{N} , \frac{i}{N})$ for all $i\in\setn$. A $q$-weighted random graph generated by the deterministic sequence $\tZ$ is constructed by randomly attributing to each edge~$(i,j)$ a weight $w_{ij}\in\R_+$ with law $q(\tZ_i,\tZ_j,\cdot)$.
 \end{enumerate}
\end{definition}

Again, the convergence results depend heavily on the degree of randomness ((r-r) or (r-d)) of the $q$-weighted random graph.

In the case of a $q$-weighted random graph generated by a random sequence, given a Lipschitz function $\phi:\R\rightarrow\R$ and an initial condition $x_0\in L^\infty(I;\R)$, the solution $\x$ to the discrete system posed on the $q$-weighted random graph converges to the solution $x$ to the continuous equation \eqref{eq:GL1}, where the limit graphon is the first moment of the weighted random graph law $q$: 
\begin{equation}\label{eq:wq}
\forall (\xi,\zeta)\in I^2, \quad w(\xi,\zeta) = \int_{\R_+} q(\xi,\zeta;du).
\end{equation}
The convergence is obtained quantitatively in the following sense: 
\[
\PP\left[ \sup_{t\in [0,T]} \|\x(t)-\mathbf{P}_{Z^N}x(t,\cdot)\|_{2,N} \geq \frac{C_1(T)}{\sqrt{N}}\right] \leq \frac{\tilde{C}_1}{N}
\]
for some constants $C_1(T)$ and $\tilde{C}_1$.

In the case of a $q$-weighted random graph generated by a deterministic sequence, more regularity is required both of the initial data and of the first moment of $q$, the weighted random graph law. More specifically, if $x_0\in C^{0,\frac{1}{2}}(I)$ and $(\xi,\zeta)\mapsto w q(\xi,\zeta;dw)$ is $\frac{1}{2}-$Hölder on $I^2$, then the solution $\x$ to the microscopic system \eqref{eq:micro1} posed on the $q$-weighted random graph converges to the solution $x$ to the continuous equation \eqref{eq:GL1}, where the limit graphon is the first moment of the weighted random graph law $q$, as given by equation \eqref{eq:wq}. Denoting by $x_N$ the projection of the vector $\x$ onto $C([0,T];L^\infty(I))$ given by \eqref{eq:discrete-to-continuous}, it holds
\[
\PP\left[ \|x_N - x\|_{C([0,T];L^2(I))} \geq \frac{C_2(T)}{\sqrt{N}}\right] \leq \frac{\tilde{C}_2}{N}
\]
for some constants $C_2(T)$ and $\tilde{C}_2$.

Both proofs of convergence rely on the Bienaym\'e-Chebyshev inequality, and in the case (r-d), on the convergence of an intermediate deterministic system given by Theorem 4 in \cite{PT23} (see Section \ref{sec:ContinuousLimitDet}).

\subsection{Links between Continuum limit and Mean-Field limits}
\label{sec:Link}

Let $N\in\N$. Denoting by $\x_i(t)\in\Rd$ the position of particle $i$ at time $t$, a general particle system can be described by $(\x_i)_{i\in\setn}\in C([0,T],\Rd)^N$, whose evolution is given by: 
\begin{equation}\label{eq:micro}
\begin{cases}
\displaystyle \frac{d}{dt} \x_i = \frac{1}{N}  \sum_{i=1}^N \phi(\frac{i}{N}, \frac{j}{N}, \x_i, \x_j) \quad \text{ for all } i\in\setn \\
\x_i(0) = x_i^{N,0}.
\end{cases}
\end{equation}

As seen in Section \ref{sec:Graphs_GL}, its limit as $N$ goes to infinity can be written as the solution $x\in C([0,T],L^2(I;\Rd))$ to the following integro-differential equation, provided that $x_i^{N,0}$ converges to $x^0$ in a suitable sense: 
\begin{equation}\label{eq:GL}
\begin{cases}
\displaystyle \partial_t x(t,\xi) = \int_I \phi(\xi,\zeta, x(t,\xi), x(t,\zeta)) d\zeta\\
x(0,\cdot) = x^0.
\end{cases}
\end{equation}

We refer to $x$ as the \textbf{continuum limit} of $\x$. Within this framework, the infinitely numerous agents are assumed to be labeled by the variable $\xi$, which spans the set $I=[0,1]$. Thus, $x(t,\xi)$ denotes the position of agent with label $\xi$ at time $t$.

Secondly, as seen in Section \ref{sec:Graphs_MFL}, taking the \textbf{non-exchangeable mean-field} limit of the microscopic system \eqref{eq:micro} yields the following  equation on the probability density $\mu\in C([0,T],\P(I\times\Rd))$:
\begin{equation}\label{eq:MFL}
\begin{cases}
\displaystyle \partial_t \mu_t^\xi(x) + \nabla_x\cdot \left( \left(\int_{I\times\Rd} \phi(\xi,\zeta,x,y) d\mu_t^\zeta(y) d \zeta\right) \; \mu_t^\xi(x) \right) = 0 \\
\mu_{t=0} = \mu_0.
\end{cases}
\end{equation}

Thirdly, if the agents are indistinguishable, i.e. if $\phi(\xi,\zeta,x(t,\xi),x(t,\zeta))= \phi(x(t,\xi),x(t,\zeta))$, we can derive the classical \textbf{exchangeable mean-field limit} giving the evolution of the probability measure $\mu\in C([0,T],\P(\Rd))$:
\begin{equation}\label{eq:MFLx}
\begin{cases}
\displaystyle \partial_t \mu_t(x) + \nabla_x\cdot \left( \left(\int_{\Rd} \phi(x,y) d\mu_t(y)\right) \; \mu_t(x) \right) = 0 \\
\mu_{t=0} = \mu_0.
\end{cases}
\end{equation}
Here, $\mu(\Omega)$ denotes the mass of agents in the space region $\Omega$.

All three equations \eqref{eq:GL}, \eqref{eq:MFL} and \eqref{eq:MFLx} are obtained as the limit of \eqref{eq:micro} as $N$ goes to infinity, as shown in Sections \ref{sec:Graphs_MFL} and \ref{sec:Graphs_GL}. Here, we explain briefly and formally how these descriptions are related to one another  (see Figure \ref{schema_static} for a visual summary). A complete and detailed overview of how all models are related can be found in \cite{PT23}.

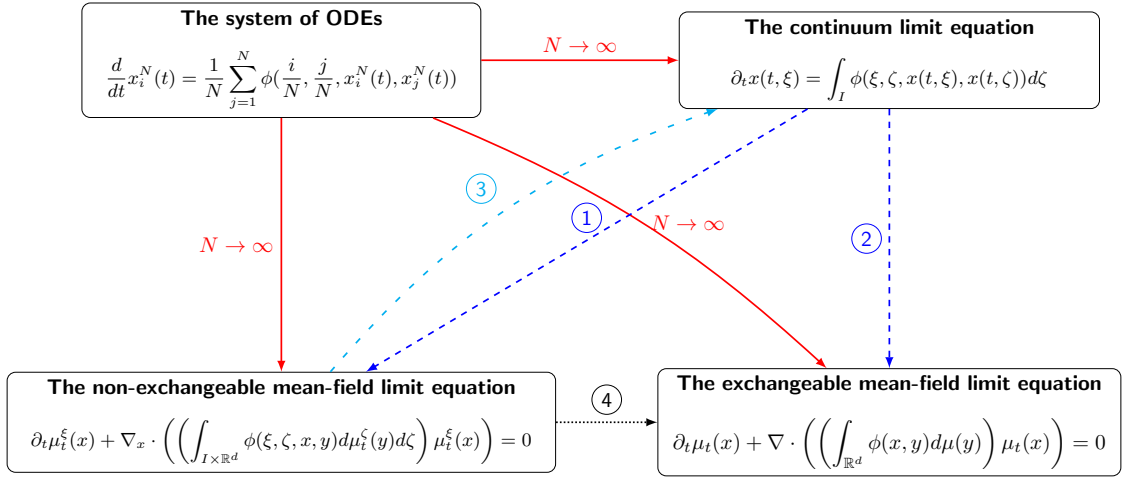
\begin{figure}
\begin{center}
\scalebox{0.8}{
\begin{tikzpicture}[>=latex,font=\sffamily]
\tikzstyle{block1} = [draw, rectangle, text width=18em, text centered, minimum height=4em, rounded corners]
\tikzstyle{block2} = [draw, rectangle, text width=19em, text centered, minimum height=4em, rounded corners]
\tikzstyle{block3} = [draw, rectangle, text width=25em, text centered, minimum height=4em, rounded corners]
\tikzstyle{block4} = [draw, rectangle, text width=25em, text centered, minimum height=4em, rounded corners]
\tikzstyle{block4} = [draw, rectangle, text width=25em, text centered, minimum height=4em, rounded corners]
\tikzstyle{block5} = [draw, rectangle, text width=21em, text centered, minimum height=4em, rounded corners]
\tikzstyle{arrow} = [->, thick]
\tikzstyle{arrow1} = [->, thick,blue,dashed]
\tikzstyle{arrow2} = [->, thick,red]
\tikzstyle{arrow3} = [->, thick,densely dotted]

\node [block1] (A1) at (0,0) {\textbf{The system of ODEs} \small \begin{equation*}
\displaystyle \frac{d}{dt} x_i^{N}(t) = \frac{1}{N}  \sum_{j=1}^N \phi(\frac{i}{N},\frac{j}{N},x_i^{N}(t),x_j^{N}(t))
\end{equation*}};
\node [block2] (A2) at (10,0) {\textbf{The continuum limit equation} \small  \begin{equation*}
\displaystyle \partial_t x(t,\xi) = \int_I \phi(\xi,\zeta,x(t,\xi), x(t,\se)) d\se
\end{equation*} };
\node [block3] (B1) at (0,-6) {\textbf{The non-exchangeable mean-field limit equation} \small \begin{equation*}
\displaystyle \partial_t \mu_t^\xi(x) + \nabla_x\cdot \left( \left(\int_{I\times\R^d}   \phi(\xi,\zeta,x,y) d\mu_t^\zeta(y)d\zeta\right)  \mu_t^\xi(x) \right) = 0 
\end{equation*}};
\node [block5] (B3) at (10,-6) {\textbf{The exchangeable mean-field limit equation} 
\begin{equation*}
\displaystyle\partial_t \mu_t(x) + \nabla\cdot \left( \left(\int_{\R^d} \phi(x,y) d\mu(y)\right)\mu_t(x)\right)= 0
\end{equation*}};

\draw [arrow2] (A1) -- node[above] {$N \to \infty$ }  (A2);
\draw [arrow2] (A1) -- node[left] {$N \to \infty$ } (B1);
\draw [arrow, bend left=10,red] (A1) to node[right]{$N \to \infty$} (B3);
\draw [arrow1] (A2) -- node[left] {\circled{2}} (B3);
\draw [arrow1] (A2) -- node[above] {\circled{1}} (B1);
\draw [arrow, bend left=15,cyan,loosely dashed] (B1) to node[above left] {\circled{3}} (A2);
\draw [arrow3] (B1) -- node[above] {\circled{4}} (B3);
\end{tikzpicture}
}
\end{center}
\caption{Links between the different equations\label{schema_static}. The red arrows show the large-population limits described in Sections \ref{sec:Graphs_MFL} and \ref{sec:Graphs_GL}. The dashed arrows 1, 2 and 3 are explained in Sections \ref{sec:Cont2NEMFL}, \ref{non_exchangeable_to_graph_limit} and \ref{sec:subordination}. Arrow 4 corresponds to Remark \ref{rem:arrow4}.}
\end{figure}

\subsubsection{From continuum limit to non-exchangeable mean-field limit}\label{sec:Cont2NEMFL}

Let $x(t,\xi)$ denote the solution to \eqref{eq:GL}, and let $\tmu_t$ denote an empirical measure defined by 
\[
\tmu_t(\xi,x) = \int_I \delta_{x(t,\zeta)}(x) \delta_{\zeta}(\xi) d\zeta.
\] 
For all test functions $f\in C^\infty(I\times\Rd)$, it holds
\[
\begin{split}
& \frac{d}{dt}\int_{I\times\Rd} f(\xi,x) d\tmu_t^\xi(\x) d \xi 
= \frac{d}{dt}\int_{I} f(\xi,x(t,\xi)) d\xi \\
= \quad & \int_{I} \nabla_x f(\xi,x(t,\xi))\cdot \left( \int_I \phi(\xi, \zeta,x(t,\xi),x(t,\zeta)) d\zeta \right)  d\xi \\
= \quad & \int_{I\times\Rd} \nabla_x f(\xi,x)\cdot \left( \int_{I\times\Rd} \phi(\xi, \zeta,x,y) d\tmu_t^\zeta(y) d \zeta  \right)  d\tmu_t^\xi(x) d \xi, 
\end{split}
\]
which shows that the continuous empirical measure $\tmu$ built from the solution $x$ to \eqref{eq:GL} is itself a solution to \eqref{eq:MFL}. This computation corresponds to the arrow 1 in Figure   \ref{schema_static}.

Note that the set of measures $\mu_t\in \P(I\times\Rd)$ that can be written as a continuous empirical measure $\tmu_t$ are the measures supported on curves $\xi\mapsto x(t,\xi)$.

\subsubsection{From non-exchangeable mean-field limit to continuum limit}
\label{non_exchangeable_to_graph_limit}

As shown in \cite{PT23}, a solution to \eqref{eq:GL} can be recovered from a solution to the non-exchangeable mean-field limit equation \eqref{eq:MFL} by taking its first moment with respect to the space variable. 

In this aim, we suppose that the marginal of $\mu_0$ with respect to its first variable is the Lebesgue measure, i.e. that
\[
\pi_I\#\mu_0 = d\xi.
\] 
As observed in \cite{PT23} (Remark 1), the marginal of $\mu_t$ with respect to its first variable is constant in time, since the transport term acts only on the space variable. It then holds $\pi_I\#\mu_t = d\xi$ for all $t\in [0,T]$.  
Let us then write the disintegration of $\mu_t$ with respect to its marginal $d\xi$ as
\[
\mu_t = \int_I \mu^\xi_t \, d\xi,
\]
and further assume the following
\begin{hyp}
 $\int_{\Rd} d\mu^\xi_t(x) = 1$ for almost all $\xi\in I$.
\end{hyp}
We then define the first moment of $\mu_t$ with respect to the space variable as
\[
\bx(t,\xi) := \int_{\Rd} x \, d\mu^\xi_t(x). 
\]
Decomposing $\bx$ on its coordinates using an orthonormal basis $(e_k)_{k\in\setd}$ of $\Rd$, it then holds for all $k\in\setd$
\[
\begin{split}
 \frac{d}{dt}\bx(t,\xi) \cdot e_k
= & \frac{d}{dt}\int_{\Rd} (x\cdot e_k) \, d\mu^\xi_t(x) 
= \int_{\Rd} \nabla_x(x\cdot e_k) \cdot \left(\int_{I\times\Rd} \phi(\xi,\zeta,x,y) d\mu^{\zeta}_t(y) d \zeta \right) \, d\mu^\xi_t(x) \\
= &  \int_{\Rd}  e_k \cdot \left(\int_{I\times\Rd} \phi(\xi,\zeta,x,y) d\mu^{\zeta}_t(y) d \zeta \right) \, d\mu^\xi_t(x).
\end{split}
\]

At this point, we are led to make a strong assumption on the form of the interaction function $\phi$.
\begin{hyp}
 We suppose that $(\xi,\zeta,x,y)\mapsto\phi(\xi,\zeta,x,y)$ can be written as the product of a function of $\xi,\zeta$ and of a linear function of $x,y$, i.e.
 \[
\phi(\xi,\zeta,x,y) = w(\xi,\zeta) (\lambda_1 x+\lambda_2 y),
\]
with $w:I^2\mapsto\R$ and $(\lambda_1, \lambda_2)\in\R^2$.
\end{hyp}
This form is actually common in models for opinion dynamics with linear interaction of the type Hegselmann-Krause (see \cite{HK}), for which the interation is given by $w(\xi,\zeta) (y-x)$.
We obtain
\[
\begin{split}
 \frac{d}{dt}\bx(t,\xi) 
= &  \int_{\Rd}   \left(\int_{I\times\Rd} w(\xi,\zeta)(\lambda_1 x +\lambda_2 y) d\mu^{\zeta}_t(y) d \zeta \right) \, d\mu^\xi_t(x) \\
= & \int_{I}  w(\xi,\zeta)\left(\lambda_1 \int_{\Rd} x d\mu^\xi_t(x) +\lambda_2 \int_{\Rd} y d\mu^{\zeta}_t(y)\right) d\zeta
=  \int_{I}  w(\xi,\zeta)\left(\lambda_1 \bx(t,\xi) +\lambda_2 \bx(t,\zeta) \right) d\zeta
\end{split}
\]
which is the continuum limit equation \eqref{eq:GL} for this specific choice of $\phi$. This computation corresponds to the arrow 3 in Figure   \ref{schema_static}.

The fact that one can recover a closed equation on the first moment of $\mu_t$ with respect to the space variable is remarkable, and can be intuitively explained as follows. 
In the general case, the evolution of $\bx(t,\xi)$ depends on all remaining particles labeled by all $\zeta\in I$ and located at all $y\in \Rd$.
The linear condition on $\phi$ implies that the combined effect of all the particles with label $\zeta$ is equivalent to that of their first moment $\bx(t,\xi)$.
The problem of obtaining a closed equation in the general (nonlinear) case is still open, and we refer the reader to \cite{PT23} (Section 3.1.2.) for further comments on this issue.

\begin{rem}
The assumption on the marginal of $\mu_0$ with respect to its first variable can be lifted without loss of generality. If $\pi_I\#\mu_0:=\nu$ for a general $\nu\in\P(I)$, one can recover the general graph-limit equation
\[
\displaystyle \partial_t x(t,\xi) = \int_I \phi(\xi,\zeta, x(t,\xi), x(t,\zeta)) d\nu(\zeta).
\]
\end{rem}

\subsubsection{Subordination of the mean-field limit to the continuum limit equation (indistinguishable case)}\label{sec:subordination}

In the case in which the particles are indistinguishable, i.e. $\phi(\xi,\zeta,x,y) = \phi(x,y)$, one can derive the classical mean-field limit equation \eqref{eq:MFLx} as the limit as $N$ tends to infinity of the particle system \eqref{eq:micro}. 
As shown in \cite{BiccariKoZuazua19}, Equation \eqref{eq:MFLx} can also be obtained directly from the continuum limit \eqref{eq:GL} using the ``continuous'' empirical measure 
\[
\bmu_t(x) = \int_I \delta_{x(t,\xi)}(x) d\xi
\] 
where $x(t,\xi)$ is a solution to \eqref{eq:GL}. Indeed, 
for all test functions $f\in C^\infty(\Rd)$, it holds
\[
\begin{split}
 \frac{d}{dt}\int_{\Rd} f(x) d\bmu_t(x) 
= &\frac{d}{dt}\int_I f(x(t,\xi)) d\xi
= \int_{I} \nabla_x f(x(t,\xi))\cdot \left( \int_I \phi(x(t,\xi),x(t,\zeta)) d\zeta \right)  d\xi \\
=  & \int_{\Rd} \nabla_x f(x)\cdot \left( \int_{\Rd} \phi(x,y) d\bmu_t(y) \right)  d\bmu_t(x),
\end{split}
\]
which is the weak formultion of \eqref{eq:MFLx}. This computation corresponds to the arrow 2 in Figure \ref{schema_static}.

Thus, deriving the continuum limit of \eqref{eq:micro} as $N$ goes to infinity and doing the transformation above is an alternative way of obtaining the classical mean-field limit equation \eqref{eq:MFLx}. 

Two important remarks need to be made. 

Firstly, all initial measures $\mu_0\in\P(\Rd)$ can be approximated by empirical measures of the form $\bmu_t(x) = \int_I \delta_{x(t,\xi)}(x) d\xi$, as shown in the following proposition.
\begin{prop}
The set of measures 
\[
\left\{ \int_I \delta_{x_0(\xi)}(x) d\xi\quad | \quad x_0:I\rightarrow\Rd \text{ measurable }\right\}
\]
is dense in $\P(\Rd)$ for the weak topology.\\
Moreover, for if $d=1$, for any $\mu_0\in\P(\R)$, there exists a measurable function $x_0:I\rightarrow \R$ such that $\int_I \delta_{x_0(\xi)}(x) d\xi$.
\end{prop}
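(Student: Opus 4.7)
The plan is to identify the set described with pushforwards of Lebesgue measure on $I$, and then exploit this reformulation in two different ways for the two claims. Specifically, for any measurable $x_0: I \to \R^d$, one has
\[
\int_I \delta_{x_0(\xi)} d\xi = (x_0)_\# \lambda_I,
\]
where $\lambda_I$ is the Lebesgue measure on $I=[0,1]$. So the question becomes: which probability measures on $\R^d$ arise as pushforwards of $\lambda_I$ by measurable maps?

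For the density statement, I would first recall that the set of empirical measures
\[
\mathcal{E} := \left\{\frac{1}{N}\sum_{i=1}^N \delta_{y_i} \;\middle|\; N\in\N,\ y_1,\dots,y_N \in \R^d\right\}
\]
is weakly dense in $\P(\R^d)$, which is a classical fact (one way is to convolve with a mollifier, approximate by a compactly supported density, then approximate integrals by Riemann sums). Next, I would observe that any element of $\mathcal{E}$ is of the desired form: given $\frac{1}{N}\sum_{i=1}^N \delta_{y_i}$, simply take the piecewise-constant map $x_0(\xi) := y_i$ for $\xi \in \left[\tfrac{i-1}{N}, \tfrac{i}{N}\right)$, so that $(x_0)_\# \lambda_I = \frac{1}{N}\sum_i \delta_{y_i}$. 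Combining these two observations yields the density claim.

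For the second statement (exact representation when $d=1$), I would use the \emph{quantile function} construction. Given $\mu_0 \in \P(\R)$ with cumulative distribution function $F(x) := \mu_0((-\infty,x])$, define
\[
x_0(\xi) := \inf\{x\in\R : F(x) \geq \xi\}, \qquad \xi \in I.
\]
This is the classical generalized inverse of $F$. It is non-decreasing, hence Borel measurable. A standard computation (using right-continuity of $F$) shows that for any $t \in \R$, $\{\xi \in I : x_0(\xi) \leq t\} = \{\xi \in I : \xi \leq F(t)\}$, so $\lambda_I(\{x_0 \leq t\}) = F(t) = \mu_0((-\infty,t])$, which gives $(x_0)_\#\lambda_I = \mu_0$, i.e.\ $\int_I \delta_{x_0(\xi)}d\xi = \mu_0$.

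The only subtle point is the well-known verification that the quantile function indeed pushes Lebesgue measure to $\mu_0$; this is a routine consequence of the definition and right-continuity of $F$. The argument is genuinely one-dimensional, which is why the statement is restricted to $d=1$ in the exact-representation part: in higher dimensions there is no canonical total ordering, and although one could still obtain exact representation via Borel isomorphism theorems identifying any Polish probability space with $(I,\lambda_I)$ modulo null sets, this would require heavier machinery and is unnecessary for the density claim proved above.
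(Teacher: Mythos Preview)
Your proof is correct and, for the density claim, follows essentially the same route as the paper: both arguments reduce to showing that finite sums of Dirac masses lie in the set, via explicit piecewise-constant representatives $x_0$. The only cosmetic difference is that you use empirical measures $\frac{1}{N}\sum_{i=1}^N \delta_{y_i}$ with equal weights and intervals of length $1/N$, whereas the paper uses general convex combinations $\sum_{i=1}^n a_i \delta_{x_i}$ with intervals of length $a_i$; either class is weakly dense, so the conclusion is the same.

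For the $d=1$ exact-representation part, your quantile-function argument is the standard one and is correct. The paper's written proof does not actually address this second claim, so here you supply more than the paper does.
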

\begin{proof}
It is well known that the set of measures that can be written as finite sums of Dirac masses is dense in $\P(\Rd)$.
We claim that for any $\bmu_0^n := \sum_{i=1}^n a_i \delta_{x_i}$ where $a_i\in [0,1]$ for all $i\in\{1,\cdots,n\}$ and $\sum_{i=1}^n a_i = 1$, there exists a function $x_0:I\rightarrow\Rd$ such that 
\[
\bmu_0^n(x) = \int_I \delta_{x_0(\xi)}(x) d\xi.
\]
We prove this claim constructively. Let $x_0$ be defined by the piecewise-constant function
\[\forall i\in\{1,\cdots,n\},\; \forall \xi\in [b_i,b_{i+1}),\; x_0(\xi) = x_i   
\]
where $b_i:= \sum_{k=1}^{i-1} a_k$.
Then 
\[
\int_I \delta_{x_0(\xi)}(x) d\xi = \sum_{i=0}^{n} \int_{b_i}^{b_i+a_{i}} \delta_{x_i}(x) d\xi = \sum_{i=1}^{n} a_{i} \delta_{x_i}.
\]
\end{proof}

Secondly, the choice of $x_0$ is in general not unique. Any measure-preserving rearrangement of $x_0$ gives the same mesure $\bmu_0$.
This implies that in passing from the solution $x(t,\xi)$ to the continuum limit equation to the solution $\mu_t(x)$ to the mean-field equation, there is an irreversible information loss. In this indistinguishable case, the agents can be relabeled without impacting the dynamics.

\begin{rem}\label{rem:arrow4}
    In the indistinguishable case, the exchangeable mean-field equation \eqref{eq:MFLx} can be recovered from the non-exchangeable mean-field equation \eqref{eq:MFL} by integrating the solution $\mu^\xi_t$ in $\xi$.
\end{rem}


\section{Large-population limits of particle systems on adaptive dynamical networks}
As mentioned in the introduction, the setting of adaptive dynamical networks is of great interest since it allows to build more realistic models. Indeed, in many real-life situations, the graph involved is not static and its evolution can depend on the  states of the system itself. 
The natural question is to see if and how the large-population limit considered above, i.e. the non-exchangeable and exchangeable mean-field limits and the continuum limit can be extended to this setting. Actually, it is a challenging problem in its generality, and up to recently, two specific frameworks have successfully addressed it.

The first one is the setting of Kuramoto-type models on $N$ oscillators \cite{gkogkas2023}. It can be written in the following form 
\begin{equation}\label{eq:kuramoto}
\begin{cases}
\displaystyle \frac{d}{dt} x_i^N = \omega_i^N( x_i^N,t) +\frac{1}{N}  \sum_{j=1}^N \w_{ij}  \phi\left( x_i^N, x^N_j\right) \quad \text{ for all } i\in\setn \\
\displaystyle \frac{d}{dt} w_{ij}^N =  - \varepsilon \left(w_{ij}^N + H( x_i^N, x^N_j) \right)
\end{cases}
\end{equation}
where $x_i^N = x_i^N(t) \in \mathbb{T} = \R / (2 \pi \mathbb{Z})$ represents the phase of the $i$-th oscillator for $ i \in \{1, \dots, N\}$,  $\omega_i^N: \mathbb{T} \times \mathbb{R} \to \mathbb{R}$ is the vector field describing the intrinsic frequency, $\phi : \mathbb{T}^2 \to \R$  is a coupling function, $w^N(t)=(w^N_{ij}(t))_{1 \leq i,j \leq N}$ is the time-evolving weight matrix of the network of oscillators, which takes into account the local information of two
interacting oscillators via the function $H: \mathbb{T}^2 \to \R$  and $\varepsilon >0$ is a parameter which controls the time scale between the dynamics on the network of the phases and the dynamics of the network weights. 

The second one can be seen as a variant of the Hegselmann-Krause dynamics \cite{HK} where, additionally to the opinions, we also are interested in the time-evolving weights of agents which represent their charisma \cite{AyiPouradierDuteil21}. This model can also be viewed as a system of ODEs on a evolving-weighted (on-symmetric) graph (see Figure \ref{fig:charisma_graph}). It can be written in the following way 
\begin{equation}\label{eq:syst-gen}
\begin{cases}
\displaystyle \frac{d}{dt} x_i^{N}(t) = \frac{1}{N}  \sum_{j=1}^N m_j^N(t)\, \phi(x_j^{N}(t)-x_i^{N}(t))\\
\displaystyle  \frac{d}{dt} m_i^{N}(t) = \psi_i^{(N)}(x^{N}(t),m^{N}(t))
\end{cases}
\end{equation}
where $x^N=(x_1^{N}, \dots, x_N^{N}):[0,T]\rightarrow(\R^d)^N$ represent the {opinions} of $N$ agents, and  $m^N=(m_1^{N}, \dots, m_N^{N}):[0,T]\rightarrow \R^N$ represent their individual {weights of influence}. Each opinion's time-evolution is affected by the opinion of each neighboring agent via the interaction function $\phi$,
proportionally to the neighboring agent's weight of influence. Moreover, the agents' weights are assumed to evolve in time and their dynamics may depend on the opinions and weights of all the other agents, via functions $\psi_i^{(N)}:(\R^d)^N\times\R^N\rightarrow\R$.
\begin{figure}[h!]
\begin{center}
\includegraphics[scale=0.25]{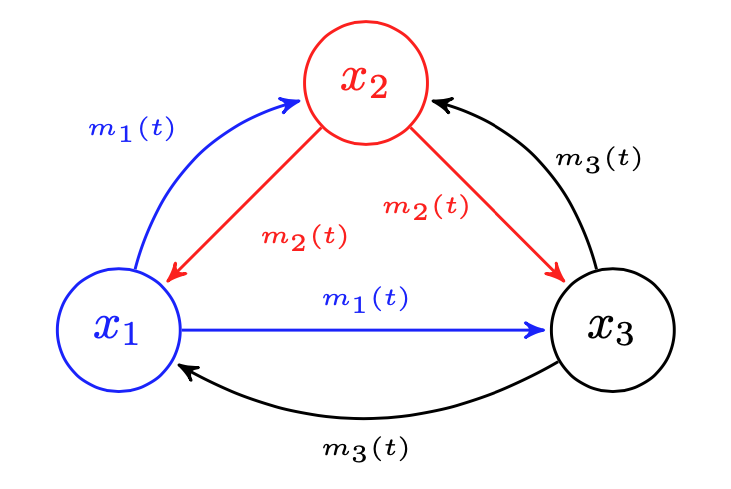}
\end{center}
\caption{Evolving-weighted graph associated to \eqref{eq:syst-gen}  in the case of three agents.}\label{fig:charisma_graph}
\end{figure}

More recently, in \cite{throm2023continuum}, inspired by the approach developed for the particular model of evolving-in-time weights in \cite{AyiPouradierDuteil21}, some results have been obtained for a more general Kuramoto-type model of the form 
\begin{equation}\label{eq:kuramoto-gen}
\begin{cases}
\displaystyle \frac{d}{dt} x_i^N = \omega_i^N( x^N,t) +\frac{1}{N}  \sum_{j=1}^N \w_{ij}  \phi\left( x_i^N, x^N_j\right) \quad \text{ for all } i\in\setn \\
\displaystyle \frac{d}{dt} w_{ij}^N =   \psi_{ij}^{(N)}(x^{N}(t),w^{N}(t))
\end{cases}
\end{equation}

As in the static case, when considering large population limit, several approaches are possible: mean-field and continuum limits. We start with the mean-field one.

\subsection{Mean-field limits}
\label{mfl_adaptive}
\subsubsection{The non-exchangeable mean-field limit}
\label{subsec:non_exchangeable_mfl}
Let us start by recalling that the question of the derivation of a non-exchangeable mean-field limit is very difficult if we want to deal with general adaptive networks. So far, there exists only partial results on very specific cases.

\paragraph{The Kuramoto-type model.} In \cite{gkogkas2023mean}, Gkogkas, Kuehn and Xu are interested in a particular form of \eqref{eq:kuramoto} that we write below
\begin{equation}\label{eq:kuramotobis}
\begin{cases}
\displaystyle \frac{d}{dt} x_i^N = \omega_i^N +\frac{1}{N}  \sum_{j=1}^N \w_{ij}  \varphi \left( x_j^N-x^N_j\right) \quad \text{ for all } i\in\setn \\
\displaystyle \frac{d}{dt} w_{ij}^N =  - \varepsilon \left(w_{ij}^N + h( x_j^N-x^N_i) \right).
\end{cases}
\end{equation}
Note that the authors explain that their result can actually extend to the more general form \eqref{eq:kuramoto} easily. Let us go back to the framework of digraph measures defined in subsection \ref{sec:MFLsparse} that we recall are measure-valued function $\eta\in \mathcal{B}(I,\mathcal{M}_+(I))$. We introduce the following generalized adaptive Kuramoto network 
\begin{equation}
\left\{\begin{array}{l}
\displaystyle \partial_t x(t,\xi) = \omega(t,\xi) + \int_I \int_\T \varphi(y - x(t,\xi)) d \nu_t^\zeta(y) d \eta_t^\xi(\zeta), \\
\displaystyle \partial_t  \eta_t^\xi(\cdot)= - \varepsilon  \eta_t^\xi(\cdot) - \varepsilon \int_\T h(y-x(t,\xi)) d\nu_t^\cdot (y) \lambda(\cdot)
\end{array}\right.
\label{eq:generalized_kuramoto}
\end{equation} 
which is interpreted in a weaker sense in integral form. The equation on $\eta^\xi$ is to be understood in the weak sense, meaning that for any bounded continuous test function $f \in \mathcal{C}_b(I)$, $\eta_t^\xi$ satisfies
\begin{multline}
\int_I f(\zeta)  d\eta_t^\xi(\zeta) = \int_I f(\zeta)  d\eta_0^\xi(\zeta) - \varepsilon \int_{t_0}^t \left(\int_I f(\zeta)  d\eta_\tau^\xi(\zeta)  \right) d\tau \\
- \varepsilon  \int_{t_0}^t  \int_I \left(f(\zeta) \int_\T h(y-x(\tau,\xi)) d \nu_\tau^\zeta(y) d \zeta\right)d\tau.
\end{multline}
The key idea of the article is that, thanks to a variation of constants formula for $\eta$, one can prove that being a solution to \eqref{eq:generalized_kuramoto} in its integral form is equivalent to being a solution to 
\begin{equation}
\left\{\begin{array}{l}
\displaystyle x(t,\xi) = x_0(\xi)+ \int_{t_o}^t \left( \omega(s,\xi) + e^{-\varepsilon(s-t_0)}  \int_I \int_\T \varphi(y - x(s,\xi)) d \nu_s^\zeta(y) d \eta_0^\xi(\zeta)\right., \\
\displaystyle~~~~~~~~~~~- \varepsilon \int_{t_0}^s e^{-\varepsilon(t-\tau)}  \int_I \left( \int_\T \varphi(y - x(s,\xi)) d \nu_s^\zeta(y) \right.\\
\displaystyle~~~~~~~~~~~~~~~~~~~~~~~~~~~~~~~~~~~~~~~~\left. \left. \cdot \int_\T h(y-x(\tau,\xi))d\nu_\tau^\zeta(y)\right) d\zeta \right)\\
\displaystyle \eta_t^\xi(\cdot)= e^{-\varepsilon(t-t_0)}  \eta_0^\xi(\cdot)- \left(\varepsilon \int_{t_0}^t e^{-\varepsilon(t-s)} \int_\T h(y-x(s,\xi)) d\nu_s^\cdot (y)ds\right) \lambda(\cdot)
\end{array}\right.
\label{eq:generalized_kuramoto_int}
\end{equation} 
Under this form, the dynamics of the oscillators are decoupled from the dynamics of the digraph measures. One can then reduce the hybrid system to a one-dimensional integral equation indexed by the vertex variable coupled on the prescribed initial graph measures as well as prescribed time-dependent measure valued functions. The mean-field limit equation is a generalized Vlasov equation:
\begin{multline}
\label{eq:Vlasov_generalized_kuramoto}
 \partial_t \rho(t,\xi,x) + \partial_x\left(\rho(t,\xi,x) \left(\omega(t,\xi) +   e^{-\varepsilon(t-t_0)}   \int_I \int_{\T} \varphi(y-x) \rho(t,\zeta,y)  dy d\eta_0^\xi(\zeta) \right. \right. \\
\left. \left. +    \int_I \int_{\T} \varphi(y-x) \rho(t,\zeta,y)  dy\left(\varepsilon \int_{t_0}^t e^{-\varepsilon(t-s)} \int_\T h(z-x) \rho(s,\zeta,z)dz ds\right)d\zeta \right) \right) 
=0
\end{multline}
Their result consist in establishing approximations of the solution to the  mean-field limit  \eqref{eq:Vlasov_generalized_kuramoto} by empirical distributions generated by a sequence of ODEs like \eqref{eq:kuramotobis} thanks to stability estimates and discretization of a given initial digraph measure and of the initial condition of the generalized Vlasov equation. 

\paragraph{The evolving-in-time weight model.} So far, there is no result of non-exchangeable mean-field limit for the general system \eqref{eq:syst-gen}. However, for weight dynamics of the form
\begin{equation}
\psi_i(x^N,m^N) = \int_{\{1,\dots,N\} \times \R^d \times \R} \tilde{S}(i,x_i^N,m_i^N,k,y,n)d\tilde{\mu}_t^N(k,y,n) 
\end{equation}
where $\tilde{S}$ is locally Lipschitz in $(x_i^N,m_i^N,y,n)$ uniformly in $(i,k)$ and $$\tilde{\mu}_t^N(k,x,m) = \frac{1}{N}  \sum_{i=1}^N  \delta(x-x^{N}_i(t))  \delta(m-m^{N}_i(t))  \delta(k-i),$$ the convergence to the following equation on the probability density $\mu \in \mathcal{C}([0,T], \mathcal{P}(I \times \R^d \times \R))$
\begin{multline}\label{eq:non_exchangeable_mfl}
\displaystyle \partial_t \mu_t^\xi(x,m) + \nabla_x\cdot \left( \left(\int_{I\times\R^d \times \R}  n \phi(x,y) d\mu_t^\zeta(y,n)d\zeta\right)  \mu_t^\xi(x,m) \right) \\
+ \partial_m \left( \left( \int_{I\times\R^d \times \R} \tilde{S}(\xi,x,m,\zeta,y,n)  d\mu_t^\zeta(y,n)d\zeta \right)  \mu_t^\xi(x,m) \right) = 0 
\end{multline}
can be established applying the result proved in \cite{PT23}. Indeed, in that case, by setting $X_i=(x_i,m_i)$, we can rewrite the system as 
\begin{equation}
\begin{cases}
\displaystyle \frac{d}{dt} X_i^N = \frac{1}{N}  \sum_{i=1}^N \Phi(\frac{i}{N}, \frac{j}{N}, X_i^N, X_j^N) \quad \text{ for all } i\in\setn \\
X_i(0) = X_i^{N,0}
\end{cases}
\end{equation}
where $\Phi$ satisfy the necessary regularity assumptions.

\subsubsection{The exchangeable mean-field limit}
In this subsection, we will only focus on the evolving-in-time weight model. It was first introduced in \cite{McQuadePiccoliPouradierDuteil19}, with a focus on its long-time behavior for specific choices of weight dynamics. The first considerations regarding the large-population limit have been addressed in \cite{PiccoliPouradierDuteil21} for a more general model, and rigorously proven in \cite{AyiPouradierDuteil21,PouradierDuteil21}.  

In this review, we denote by \textit{exchangeable mean-field limit} the classical mean-field limit, which initially appeared in the context of gas dynamics (see \cite{braun1977} for instance), and which only applies to systems that preserve \textit{indistinguishibility} (or \textit{exchangeability}, see Def. \ref{def:exchangeability}). 
Recall that for classical particle systems, the empirical measure $\nu^N_t\in \P(\R^d)$ is defined by
\begin{equation*}
\nu^N_t(x) := \frac{1}{N} \sum_{i=1}^N \delta(x-x^{N}_i(t)).
\end{equation*}
Importantly, $\nu^N_t$ stays unchanged for any relabeling of the particles, which justifies that it can be used as a link between a particle system and a Vlasov-type equation only if the particle system preserves exchangeability.

In the case of System  \eqref{eq:syst-gen}, due to the specific role played by the weights, it has been chosen to work with a modified empirical measure $\hat{\mu}^N_t\in \P(\R^d)$, defined by
\begin{equation}\label{eq:empmes}
\hat{\mu}^N_t(x) := \frac{1}{N} \sum_{i=1}^N m^{N}_i(t) \delta(x-x^{N}_i(t)).
\end{equation}
Notice that in addition to being blind to any relabeling of the agents, this new empirical measure stays unchanged if the weights of agents at the same location $x$ are redistributed between them. 
Thus, the notion of indistinguishibility in that case needs to include the preservation of ``grouping of the agents'': the dynamics must not be influenced by redistribution of the weights of grouped agents.

For this reason, 
 the mean-field limit has been proved for the particular class of weight dynamics which preserves this new notion of indistinguishability, given by
\begin{equation}
\label{eq:psi_i_gen}
\psi_i^{(N)}(x^N,m^N) = m_i^N(t)\frac{1}{N^k} \sum_{j_1=1}^N \cdots \sum_{j_k=1}^N m_{j_1}^N(t)\cdots m_{j_k}^N(t) S(x_i^N(t), x_{j_1}^N(t), \cdots x_{j_k}^N(t)),
\end{equation} 
where $k\in\N$ and $S:(\R^d)^{k+1}\rightarrow \R$ is globally bounded and Lipschitz. 
Another assumption is that the total weight of the system is conserved, i.e. $\sum_i \psi_i^{(N)} = 0$, in order for $\hat{\mu}^N_t$ to be a probability measure for all $t\geq 0$.

\begin{rem} 
This last assumption is formulated as a skew-symmetry property on $S$ in \cite{AyiPouradierDuteil21,PouradierDuteil21}, 
which indeed leads to the conservation of the total weight.
\end{rem}


Thus, for this special class of weights, it can be proved that the empirical measure converges to a measure $\mu_t\in \P(\R^d)$  solution  to the following transport equation with source: 
\begin{equation}
\label{eq:mfl_weight}
\partial_t \mu_t(x) + \nabla\cdot ( V[\mu_t](x) \mu_t(x)) = h[\mu_t](x)
\end{equation}
with the non-local vector-field given by
\[
\forall \mu\in\PR,\quad \forall x\in\R^d,  \quad V[\mu](x) = \int_{\R^d} \phi(y-x) d\mu(y),
\]
the non-local source term given by
\[
\forall \mu\in \PR, \quad \forall x\in\R^d, \quad h[\mu](x) = \left(\int_{(\R^d)^k} S(x,y_1,\cdots,y_k) d\mu(y_1)\cdots d\mu(y_k)\right) \mu(x).
\]

\paragraph{Regular interaction function.}
For  $\phi\in\Lip(\R^d; \R)$, this result is actually proven by two different approaches. In \cite{PouradierDuteil21}, the proof consists of noticing that the empirical measure is already a solution to \eqref{eq:mfl_weight} and in establishing continuity with respect to the initial data, implying that 
\[
 \dbl(\hat{\mu}^N_0,\mu_0)\xrightarrow[N\rightarrow\infty]{} 0 \; \Rightarrow \; \dbl(\hat{\mu}^N_t,\mu_t)\xrightarrow[N\rightarrow\infty]{} 0, 
\]
where $\mu_t$ is the solution to \eqref{eq:mfl_weight} with initial data $\mu_0$. The other proof, in \cite{AyiPouradierDuteil21}, is actually based on exploiting the continuum limit (see section \ref{Links2}).

\paragraph{Singular interaction function.} In \cite{BenPorat2023}, the authors are concerned with analyzing the mean-field limit for \eqref{eq:syst-gen} in the particular case of the attractive 1D Coulomb interaction where $\phi(x) = V'(x)$ with $V(x)=|x|$, i.e. $\phi(x)=\text{sgn}(x)$ with the convention $\text{sgn}(0)=0$. 
More precisely, they study the following system 
\begin{equation}\label{eq:singular_interaction}
\begin{cases}
\displaystyle \frac{d}{dt} x_i^{N}= \frac{1}{N}  \sum_{j=1}^N m_j^N\, \text{sgn}(x_j^{N}-x_i^{N})\\
\displaystyle  \frac{d}{dt} m_i^{N}(t) = \frac{1}{N} m_i^N \sum_{j=1}^N m_j^N S(x_j^N-x_i^N)
\end{cases}
\end{equation}
where $S \in \mathcal{C}_0^\infty(\R)$ is assumed to be odd. 

The mean-field equation for this ODE system is then the following
\begin{equation*}\label{eq:mfl_singular_interaction}
\partial_t \mu_t(x) - \partial_x \left( \left( \int_{- \infty}^x \mu_t(dy) - \int_x^{+ \infty} \mu_t(dy)\right) \mu_t(x)\right) = \mu_t(x) (S \star \mu_t)(x).
\end{equation*} 
We start by noticing that the odd character of $S$ implies that $\mu_t$ stays a probability density for all times. Thus, the above equation can be rewritten as 
\begin{equation}\label{eq:mfl_singular_interaction2}
\partial_t \mu_t(x) - \partial_x \left( \left( 2 \int_{- \infty}^x \mu_t(dy) - 1\right) \mu_t(x)\right) = \mu_t(x) (S \star \mu_t)(x).
\end{equation} 
The main idea developed in \cite{BenPorat2023} is to shift the focus and instead of studying \eqref{eq:mfl_singular_interaction2}, to analyze the equation for the primitive of $\mu_t$. Let us set $$F(t,x) := - \frac{1}{2} + \int_{- \infty}^x \mu_t(dy),$$ then integrating \eqref{eq:mfl_singular_interaction2} yields the following non-local Burgers-type equation for F
\begin{equation}\label{eq:burgers}
\partial_t F + \partial_x (A(F))= \mathbf{S}[F](t,x)
\end{equation}
where $A(F)= -F^2$ and $$\mathbf{S}[F](t,x) := F(t,x) (\varphi \star F)(t,x) - \int_{- \infty}^x  F(t,z) (\partial_z \varphi \star F)(t,z) dz$$ with $\varphi := \partial_x S$.

One of the advantages of this new formulation is that the flux term in the new Burgers equation is local. The strategy is to consider a primitive of the empirical measure associated to \eqref{eq:singular_interaction} $$F_N(t,x) := - \frac{1}{2} + \frac{1}{N} \sum_{j=1}^N m_j(t) H(x-x_j(t))$$ where $H$ is the Heaviside function. One can then establish that $F_N$ is an entropy solution to 
\begin{equation}\label{eq:Burgers_approx}
\partial_t F_N + \partial_x (A_N(t,F_N))= \mathbf{S}[F_N](t,x)
\end{equation}
where $A_N$ is a time-dependent approximation of the time-independent flux $A$. This proof relies on classical arguments like the use of Rankine-Hugoniot and Oleink conditions adapted to this setting (see \cite[Appendix A]{BenPorat2023} for more details). Thee mean-field limit is proven by extracting a converging subsequence and establishing some stability estimates. More precisely, provided that $F_N^0$ converges to $F^0$ in $L^1(\R)$, $F_N$ can be proven to converge to $F$ in $\mathcal{C}([0,T], L^1(\R))$ where $F_N$ and $F$ are respectively the solutions to \eqref{eq:Burgers_approx} and \eqref{eq:burgers} associated with the respective initial data $F_N^0$ and $F^0$.

\subsection{Continuum limit}\label{sec:GL_adaptive}

As mentioned in the static case, the study of continuum limits is of great interest. For instance, in \cite{medvedev_sw}, the analysis of the steady states on small world graphs was carried out by using those continuum limits. Thus, because of the relevance of adaptive dynamical networks from a modeling point of view, it seems natural to extend those limits to that setting. 

\paragraph{Regular weight dynamics.} The first result of this type was established in \cite{AyiPouradierDuteil21}, where the following limit equation
\begin{equation}
\left\{\begin{array}{l}
\displaystyle \partial_t x(t,\xi) = \int_I m(t,\zeta)\phi(x(t,\se) - x(t,\xi)) d\se \\
\partial_t m(t,\xi) = \psi(\xi,x(t,\cdot),m(t,\cdot)),
\end{array}\right.
\label{eq:GraphLimit-gen}
\end{equation} 
was obtained as the continuum limit of the particle system with time-varying weights \eqref{eq:syst-gen}. Here,
 $\xi$ represents the continuous index variable taking values in $I:=[0,1]$, as introduced in the previous sections. 
The proof requires some regularity assumptions and some bounds.
\begin{hyp}\label{hyp:psi}
The function $\psi:I\times L^\infty(I;\R^d)\times L^\infty(I;\R)$ is assumed to satisfy the following Lipschitz properties: there exists $L_\psi>0$ such that for all $(x_1,x_2,m_1,m_2)\in L^2(I)^4$,
\begin{equation}\label{eq:psilip2}
\begin{cases}
 \|\psi(\cdot,x_1,m_1)-\psi(\cdot,x_2,m_1)\|_{L^2(I)} \, \leq \,L_\psi \|x_1-x_2\|_{L^2(I)}\\
 \|\psi(\cdot,x_1,m_1)-\psi(\cdot,x_1,m_2)\|_{L^2(I)} \, \leq \, L_\psi \|m_1-m_2\|_{L^2(I)}.
\end{cases}
\end{equation}
Assume also that there exists $C_\psi>0$ such that for all $(x,m)\in L^\infty(I,\R^d\times\R)$, for all $\xi\in I$, 
\begin{equation}\label{eq:psisublin}
|\psi(\xi, x, m)| \leq C_\psi( 1+\|m\|_{L^\infty(I)}).
\end{equation}
\end{hyp}
\noindent while  $\phi\in\Lip(\R^d; \R)$.\\
The sublinear growth assumption \eqref{eq:psisublin} may seem restrictive, it is actually necessary in order to prevent the blow-up in finite-time of the weight function $m$. Moreover, it provides a framework which is coherent with the one developed in~\cite{Medvedev14} on graphs with $L^\infty$ weights. Indeed, one can view system \eqref{eq:GraphLimit-gen} as the evolution of the opinions $x$ on a time dependent weighted non-symmetric graph with weights $w(t,\xi,\zeta)=m(t,\zeta)$.

The result follows the same path as in the static case. Let us define
\begin{equation}
\label{eq:xN}
\begin{cases}
\displaystyle x_N(t,\xi) := \sum_{i=1}^N x_i^{N}(t) \mathbf{1}_{[\frac{i-1}{N}, \frac{i}{N})}(\xi)\\
\displaystyle m_N(t,\xi) := \sum_{i=1}^N m_i^{N}(t) \mathbf{1}_{[\frac{i-1}{N}, \frac{i}{N})}(\xi).
\end{cases}
\end{equation}
 Consider the weight dynamics \eqref{eq:syst-gen} with $\psi_i$ defined using the functional $\psi$ appearing in \eqref{eq:GraphLimit-gen} as follows: 
 \begin{equation}\label{eq:psi}
\forall i\in\elts,\qquad \psi_i^{(N)}(x^{N}(t),m^{N}(t)) = N \int_{\frac{i-1}{N}}^{\frac{i}{N}} \psi(\xi,x_N(t,\xi),m_N(t,\xi)) d\xi.
\end{equation}
 The main observation relies on the fact that there is an equivalence between being a solution to \eqref{eq:syst-gen} and the fact that the associated $x_N$ and $m_N$ defined in \eqref{eq:xN} satisfy the following system of integro-differential equations 
\begin{equation}\label{eq:syst-contN}
\begin{cases}
\displaystyle \partial_t x_N(t,\xi) = \int_I m_N(t,\se)\, \phi(\txn(t,\se)-\txn(t,\xi)) \,d\se\\
\displaystyle \partial_t \tmn(t,\xi) = N \int_{\frac{1}{N}\floor{\xi N}}^{\frac{1}{N}(\floor{\xi N}+1)} \psi(\se, \txn(t,\cdot),\tmn(t,\cdot)) \,d\se.
\end{cases}
\end{equation}
Through some fixed-point argument, one can prove that there exists a unique solution $(x^N,m^N)$ to \eqref{eq:syst-gen} and $(x,m)$ to \eqref{eq:GraphLimit-gen} that respectively belong to $\mathcal{C}([0,T];\R^{dN} \times \R^N)$ and  $\mathcal{C}([0,T];L^\infty(I;\R^{dN} \times \R^N))$. Then, as in \cite{Medvedev14}, the proof relies on some $L^2$-estimates of $x_N-x$ and $m_N - m$. More precisely, using the regularity assumptions, some bounds, Cauchy-Schwarz and Gronwall inequalities, it holds 
\begin{multline}
\sup_{t\in [0,T]} \left(\|(x_N-x)(t)\|_{L^2(I)}+\|(m_N - m)(t)\|_{L^2(I)} \right) \\ \leq \left(\|(x_N-x)(0)\|_{L^2(I)}+\|(m_N - m)(0)\|_{L^2(I)}+\int_0^T  \|g_N(\tau)\|_{L^2(I)}d\tau \right) e^{KT}
\end{multline}
for some $K>0$ with 
$$
g_N:(t,\xi) \mapsto N \int_{\frac{1}{N}\floor{\xi N}}^{\frac{1}{N}(\floor{\xi N}+1)}  \psi(\se,x(t),m(t))  \,d\se  - \psi(\xi ,x(t),m(t)).
$$
Thus, defining the initial conditions for the microscopic dynamics as 
\begin{equation}
\label{eq:ICx}
\begin{cases}
\displaystyle x^{0,N} := \left( N \int_{\frac{i-1}{N}}^{\frac{i}{N}} x_0(s) ds \right)_{i \in \{1, \dots, N \}} \in (\R^d)^N \\
\displaystyle  m^{0,N}  := \left( N \int_{\frac{i-1}{N}}^{\frac{i}{N}} m_0(s) ds \right)_{i \in \{1, \dots, N \}} \in (\R)^N,
\end{cases}
\end{equation}
the result is obtained using Lebesgue's differentiaton theorem and the dominated convergence theorem for the term containing $g_N$. Indeed, it implies the convergence in respectively $\mathcal{C}([0,T];L^2(I;\R^{dN}))$ and $\mathcal{C}([0,T];L^2(I;\R^{N}))$ of $x_N$ and $m_N$ to $x$ and $m$, solutions to \eqref{eq:GraphLimit-gen} with initial data $x(0,\cdot)=x_0$ and $m(0,\cdot)=m_0$.

\paragraph{Regular edge dynamics.} The second result of this type was established in \cite{gkogkas2023} for a particular adaptive Kuramoto-type model \eqref{eq:kuramoto}.  Although the result is only valid for dense graphs and therefore involves working with the graphon framework just as in \cite{AyiPouradierDuteil21,Medvedev14}, it is stated in a slightly more general framework which, if the theory allowed, would make it possible to deal with a more general class of graphs.

We start by recalling some concepts of Graph Theory mentioned in Sections \ref{sec:Intro} and \ref{sec:MFLsparse}. As seen previously, a graphop is a bounded self-adjoint and positivity-preserving operator $A:L^\infty(I) \to L^1(I)$. It can be viewed as a generalized concept for the adjacency matrix. 
Importantly, the Riesz representation theorem allows to define graphops using a family of finite measures $(\eta^\xi)_{\xi \in I}$ called fiber measures as follows:
In the case where $\eta^\xi$ has a density $w$ as $$d\eta^\xi(\zeta) = w(\xi, \zeta)d\zeta,$$ then $w$ plays the role of the usual graphon (see Def. \ref{def:graphon} and section \ref{sec:MFLdense}).
In \cite{gkogkas2023}, the continuum limit of the adaptive Kuramoto-type model \eqref{eq:kuramoto} is proven to be
\begin{equation}
\left\{\begin{array}{l}
\displaystyle \partial_t x(t,\xi) = \omega(\xi,x(t,\xi),t) + \int_I \phi(x(t,\xi),x(t,\zeta)) d \eta_t^\xi(\zeta), \\
\displaystyle \partial_t  \eta_t^\xi(\zeta)= - \varepsilon  \eta_t^\xi(\zeta) + \varepsilon H(x(t,\xi),x(t,\zeta)) \lambda(y)
\end{array}\right.
\label{eq:GraphLimit-kuramoto}
\end{equation} 
where $\lambda$ is the Lebesgue measure. Equation \eqref{eq:GraphLimit-kuramoto} is to be interpreted in the integral form and the equation on $\eta^\xi$ in the weak sense, meaning that for any bounded continuous test-function $f \in \mathcal{C}_b(I)$,
\begin{multline}
\int_I f(\zeta)  d\eta_t^\xi(\zeta) = \int_I f(\zeta)  d\eta_0^\xi(\zeta) - \varepsilon \int_{t_0}^t \left(\int_I f(\zeta)  d\eta_\tau^\xi(\zeta)  \right) d\tau \\
- \varepsilon  \int_{t_0}^t \left( \int_I f(\zeta)  H(x(\tau,\xi),x(\tau,\zeta)) d \zeta\right)d\tau.
\end{multline}
As in the previous case, the proof heavily relies on the regularity assumptions. The functions $\phi,H: \T^2 \to \R$ are Lipschitz continuous and $\omega:I\times \T \times \R \to \R$ is continuous in $t$ and Lipschitz continuous in $x$ and $\xi$. Provided some additional continuity and absolute continuity hypotheses on $(x_0,\eta_0)$ (see \cite[Assumptions (A5)-(A6)]{gkogkas2023}), one can prove the convergence in $\mathcal{C}([t_0,t_0+T], L^\infty(I;\T)) \times \mathcal{C}([t_0,t_0+T], \mathcal{B}(I,\mathcal{M}(I)))$ of the piecewise-constant functions $(x_N,\eta_N)$ defined as 
\begin{equation}
\label{eq:phiN}
\begin{cases}
\displaystyle x_N(t,\xi) := \sum_{i=1}^N x_i^{N}(t) \mathbf{1}_{[\frac{i-1}{N}, \frac{i}{N})}(\xi)\\
\displaystyle w_N(t,\xi, \zeta) := \sum_{i=1}^N w_{ij}^{N}(t) \mathbf{1}_{[\frac{i-1}{N}, \frac{i}{N})}(\xi)  \mathbf{1}_{[\frac{j-1}{N}, \frac{j}{N})}(\zeta)\\
\displaystyle d(\eta_N)_t^\xi(\zeta) := w_N(t,\xi,\zeta) d \zeta
\end{cases}
\end{equation}
to $(x,\eta_t)$ solution to \eqref{eq:GraphLimit-kuramoto} for the appropriate initial data (they are build similarly as for the previous proof) with $\mathcal{B}(I,\mathcal{M}(I))$ being the space of bounded measurable functions from $I$ to $\mathcal{M}(I)$. As before, the existence and uniqueness of solutions relies on the use of a fixed-point theorem while the convergence uses, among other things, the continuous dependence on the initial data and on the function $\omega$ that is established through some stability estimates.

More recently, in the graphon framework, following the spirit of the proofs in \cite{AyiPouradierDuteil23,Medvedev14}, the convergence of \eqref{eq:kuramoto-gen} to the continuum limit equation
\begin{equation}
\left\{\begin{array}{l}
\displaystyle \partial_t x(t,\xi) = \omega(t,\xi,x(t,\cdot)) + \int_I m(t,\zeta)\phi(t, x(t,\xi), x(t,\se)) d\se  \\
\partial_t w(t,\xi,\zeta) = \psi(\xi,\zeta,x(t,\cdot),w(t,\cdot,\cdot)),
\end{array}\right.
\label{eq:GraphLimit_kuramoto-gen}
\end{equation} 
was established under some regularity assumptions and some bounds similar to those of Hypothesis \eqref{hyp:psi}.

\paragraph{Singular weight dynamics.} In \cite{porat2023graph}, the authors proposes to deal with the model \eqref{eq:syst-gen} in the case where the weight dynamics contain a singularity. This approach is motivated by the ``pairwise competition'' model introduced in \cite{McQuadePiccoliPouradierDuteil19}, which can be written as
\begin{equation}\label{eq:singular_weights}
\begin{cases}
\displaystyle \frac{d}{dt} x_i^{N} = \frac{1}{N}  \sum_{j=1}^N m_j^N\, \phi(x_j^{N}-x_i^{N})\\
\displaystyle  \frac{d}{dt} m_i^{N} = \frac{1}{2N^2} m_i^N\sum_{k,j=1}^N   m_k^N  m_j^N \left(\phi(x_k^N-x_i^N)+\phi(x_k^N-x_j^N)\right) \mathbf{s}(x_i^N-x_j^N).
\end{cases}
\end{equation}
The aim of \cite{porat2023graph} is to extend the result established in \cite{AyiPouradierDuteil21} to the case where $\mathbf{s}$ presents a jump discontinuity at the origin. More precisely, one considers the following one-dimensional setting:
\begin{hyp}
The restriction $\mathbf{s}_{|(0,\infty)}$ and  $\mathbf{s}_{|(-\infty,0)}$ are Lipschitz continuous and $\mathbf{s}$ is odd and bounded.
\end{hyp}
Under these assumptions, convergence is obtained to the following system of equations 
\begin{equation}
\left\{\begin{array}{l}
\displaystyle \partial_t x(t,\xi) = \int_I m(t,\zeta)\phi(x(t,\se) - x(t,\xi)) d\se \\
\displaystyle \partial_t m(t,\xi) = m(\xi) \int \int_{I^2} m(\zeta)m(\zeta_*) \left( \phi(x(\zeta_*)-x(\xi))+\phi(x(\xi)-x(\zeta_*))\right) \mathbf{s}(x(\xi)-x(\zeta)) d\zeta d\zeta_*. 
\end{array}\right.
\label{eq:GraphLimit-singular_weights}
\end{equation} 
An important necessary condition for the well-posedness of the microscopic dynamics is the fact that the opinions remain separated for all times provided that they are separated initially. Similarly, the solution to the continuum limit equation is expected to satisfy an analogue property: in dimension 1, this is contained in the assumption that $x^0$ is one-to-one. Then, a pointwise evaluation of $x^0$ is needed and therefore, unlike the previous frameworks, a more natural assumption is $x^0 \in \mathcal{C}(I)$ rather than $x^0 \in L^\infty(I)$.

The proofs for the existence and uniqueness of solutions to \eqref{eq:GraphLimit-singular_weights} and of the convergence to the continuum limit equation of \eqref{eq:singular_weights} are quite similar to those in \cite{AyiPouradierDuteil21}, the main difference being of course in the handling of the equation for the weight dynamics. Because of the singularity in $0$, one introduces the sets $$A_N(t,\xi) :=\{\zeta \in I ~|~ x_N(t,\zeta)-x_N(\xi,t) >0\}, \quad B_N(t,\xi):=A_N^c(t,\xi)$$ and $$A(t,\xi) :=\{\zeta \in I ~|~ x(t,\zeta)-x(\xi,t) >0\}, \quad B(t,\xi):=A^c(t,\xi)$$
and uses them to split the integrals appearing in the computations into several terms. One can then conclude using the convergence of $\mathbf{1}_{A_N(0,\xi)}(\zeta)$ to $\mathbf{1}_{A(0,\xi)}(\zeta)$ and of $\mathbf{1}_{B_N(0,\xi)}(\zeta)$ to $\mathbf{1}_{B(0,\xi)}(\zeta)$ as $N \to \infty$. 
\begin{rem} This result is also also extended to any dimension $d>1$ with $\xi$ varying in the $d$-dimensional unit cube $I^d$ but we will not expand on this point and refer the reader to \cite[Section 5]{porat2023graph} for more details.
\end{rem}


\subsection{Links between Continuum limit and Mean-Field limits for the case of evolving-in-time weights}
\label{Links2}

This section is devoted to bridging all the different limit equations obtained for the opinion model with time-evolving weights. 
As previously, for $N \in \N$, we denote $x_i^N(t) \in \R^d$ the opinion of agent $i$ at time $t$ and $m_i^N(t) \in \R$ its weight (representing its charisma). We  are interested in the system
\begin{equation}\label{eq:syst-gen2}
\begin{cases}
\displaystyle \frac{d}{dt} x_i^{N}(t) = \frac{1}{N}  \sum_{j=1}^N m_j^N(t)\, \phi(x_i^{N}(t),x_j^{N}(t))\\
\displaystyle  \frac{d}{dt} m_i^{N}(t) = \psi_i^{(N)}(x^{N}(t),m^{N}(t)).
\end{cases}
\end{equation}
For initial data such that $m_0$ is strictly positive, one can actually prove that $m^N(t)$ remains positive for all $t \in [0,T]$. Thus, from here onwards, the weights are assumed to belong to $\R_+^*$. Taking the continuum limit leads to the convergence in $ \mathcal{C}([0,T];L^2(I;\R^{dN})) \times \mathcal{C}([0,T];L^2(I;(\R_+^*)^{N}))$ to $(x,m)$ which is a solution to the integro-differential equation
\begin{equation}
\left\{\begin{array}{l}
\displaystyle \partial_t x(t,\xi) = \int_I m(t,\zeta)\phi(x(t,\xi), x(t,\se)) d\se \\
\partial_t m(t,\xi) = \psi(\xi,x(t,\cdot),m(t,\cdot)),
\end{array}\right.
\label{eq:GraphLimit-gen2}
\end{equation} 
provided that the initial data converges in a suitable sense.

\paragraph{The non exchangeable mean-field limit.} As mentioned  in Subsection \ref{subsec:non_exchangeable_mfl}, for weight dynamics of the form
\begin{equation*}
\psi_i(x^N,m^N) = \int_{\{1,\dots,N\} \times \R^d \times \R_+^*} \tilde{S}_i(x_i^N,m_i^N,k,y,n)d\tilde{\mu}_t^N(k,y,n) 
\end{equation*}
with the appropriate regularity assumptions, the non-exchangeable mean-field limit satisfied by the limit probability density $\mu \in \mathcal{C}([0,T], \mathcal{P}(I \times \R^d \times \R_+^*))$ is
\begin{multline}\label{eq:non_exchangeable_mfl2}
\displaystyle \partial_t \mu_t^\xi(x,m) + \nabla_x\cdot \left( \left(\int_{I\times\R^d \times \R_+^*}  n \phi(x,y) d\mu_t^\zeta(y,n)d\zeta\right)  \mu_t^\xi(x,m) \right) \\
+ \partial_m \left( \left( \int_{I\times\R^d \times \R_+^*} \tilde{S}(\xi,x,m,\zeta,y,n)  d\mu_t^\zeta(y,n)d\zeta \right)  \mu_t^\xi(x,m) \right) = 0 
\end{multline}

\paragraph{The exchangeable mean-field limits.}  In the case where the agents are indistinguishable, in the sense mentioned in Section \ref{mfl_adaptive}, we can be interested in two different probability densities: the one for weights and opinions belonging to $\mathcal{C}([0,T], \mathcal{P}(\R^d \times \R_+^*))$ and the one only for opinions belonging to $\mathcal{C}([0,T], \mathcal{P}(\R^d))$. In both cases, the mean-field limit can actually be obtained for a class, slightly different from the one presented previously, with a straightforward adaptation of the proof. \\

\begin{enumerate}
    \item  For weight dynamics of the form
\begin{equation}
\label{eq:wd_exchangeable1}
\psi(\xi,x(t,\cdot),m(t,\cdot)) = \int_{\R^d \times \R_+^*} \overline{S}(x(t,\xi),m(t,\xi),y,n)  d\overline{\mu}_t(y,n) 
\end{equation}
with \begin{equation}
\label{eq:empirical_measure_exchangeable1}
\overline{\mu}_t(x,m) =  \int_I \delta({x-x(t,\xi)}) \delta({m-m(t,\xi)}) d\xi,
\end{equation}
the classical mean-field equation can be derived for the probability measure $\mu \in \mathcal{C}([0,T], \mathcal{P}(\R^d \times \R_+^*))$, obtaining a transport equation in the variables $x$ and $m$: 
\begin{multline}\label{eq:exchangeable_mfl1}
\displaystyle \partial_t \mu_t(x,m) + \nabla_x\cdot \left( \left(\int_{\R^d \times \R_+^*}  n \phi(x,y) d\mu_t(y,n)\right)  \mu_t(x,m) \right) \\
+ \partial_m \left( \left( \int_{\R^d \times \R_+^*} \overline{S}(x,m,y,n)  d\mu_t(y,n)\right)  \mu_t(x,m) \right) = 0 
\end{multline}

\item As discussed in Section \ref{mfl_adaptive}, for the  general class of weight dynamics
\begin{equation}
\label{eq:wd_exchangeable2}
\psi(\xi,x(t,\cdot),m(t,\cdot)) = m(t,\xi) \int_{\R^d } \hat{S}(x(t,\xi),y)  d\hat{\mu}_t(y) 
\end{equation}
with \begin{equation}
\label{eq:empirical_measure_exchangeable2}
\hat{\mu}_t(x) =  \int_I m(t,\xi) \delta({x-x(t,\xi)}) d\xi,
\end{equation}
the probability measure $\mu \in \mathcal{C}([0,T], \mathcal{P}(\R^d))$ obtained in the limit satisfies the following transport equation with source: 
\begin{equation}
\label{eq:exchangeable_mfl2}
\partial_t \mu_t(x) + \nabla_x\cdot \left( \int_{\R^d} \phi(x,y) d\mu(y)\mu_t(x)\right) = \mu_t(x) \int_{\R^d } \hat{S}(x,y)  d\hat{\mu}_t(y).
\end{equation}
\end{enumerate}

Thus, \eqref{eq:GraphLimit-gen2}, \eqref{eq:non_exchangeable_mfl2}, \eqref{eq:exchangeable_mfl1},  \eqref{eq:exchangeable_mfl2} can be obtained as the limit of \eqref{eq:syst-gen2} as $N$ goes to infinity. The derivations of \eqref{eq:GraphLimit-gen2} and  \eqref{eq:exchangeable_mfl2} have been done rigorously in \cite{AyiPouradierDuteil21}. The derivation of  \eqref{eq:non_exchangeable_mfl} can be obtained thanks to the results in \cite{PT23}. We will say a word about the derivation of \eqref{eq:exchangeable_mfl1} in the following. As in section \ref{sec:Link}, let us try to clarify how all theses different descriptions relate to each other (see Figure \ref{schema} for a visual summary).

\begin{figure}
\begin{center}
\scalebox{0.8}{
\begin{tikzpicture}[>=latex,font=\sffamily]
\tikzstyle{block1} = [draw, rectangle, text width=18em, text centered, minimum height=4em, rounded corners]
\tikzstyle{block2} = [draw, rectangle, text width=19em, text centered, minimum height=4em, rounded corners]
\tikzstyle{block3} = [draw, rectangle, text width=25.6em, text centered, minimum height=4em, rounded corners]
\tikzstyle{block4} = [draw, rectangle, text width=26em, text centered, minimum height=4em, rounded corners]
\tikzstyle{block5} = [draw, rectangle, text width=16.3em, text centered, minimum height=4em, rounded corners]
\tikzstyle{arrow} = [->, thick]
\tikzstyle{arrow1} = [->, thick,blue,loosely dashed]
\tikzstyle{arrow2} = [->, thick,red]
\tikzstyle{arrow3} = [->, thick,densely dotted]

\node [block1] (A1) at (0,0) {\textbf{The system of ODEs} \small \begin{equation*}
\begin{cases}
\displaystyle \frac{d}{dt} x_i^{N}(t) = \frac{1}{N}  \sum_{j=1}^N m_j^N(t)\, \phi(x_i^{N}(t),x_j^{N}(t))\\
\displaystyle  \frac{d}{dt} m_i^{N}(t) = \psi_i^{(N)}(x^{N}(t),m^{N}(t))
\end{cases}
\end{equation*}};
\node [block2] (A2) at (10,0) {\textbf{The continuum limit equation} \small  \begin{equation*}
\left\{\begin{array}{l}
\displaystyle \partial_t x(t,\xi) = \int_I m(t,\zeta)\phi(x(t,\xi), x(t,\se)) d\se \\
\partial_t m(t,\xi) = \psi(\xi,x(t,\cdot),m(t,\cdot))
\end{array}\right.
\end{equation*} };
\node [block3] (B1) at (0,-6) {\textbf{The non-exchangeable mean-field limit equation} \small \begin{multline*}
\displaystyle \partial_t \mu_t^\xi(x,m) + \nabla_x\cdot \left( \left(\int_{I\times\R^d \times \R_+^*}  n \phi(x,y) d\mu_t^\zeta(y,n)d\zeta\right)  \mu_t^\xi(x,m) \right) \\
+ \partial_m \left( \left( \int_{I\times\R^d \times \R_+^*} \tilde{S}(\xi,x,m,\zeta,y,n)  d\mu_t^\zeta(y,n)d\zeta \right)  \mu_t^\xi(x,m) \right) = 0 
\end{multline*}};
\node [block4] (B2) at (4,-12) {\textbf{The exchangeable mean-field limit equation for opinions and weights} \begin{multline*}
\displaystyle \partial_t \mu_t(x,m) + \nabla_x\cdot \left( \left(\int_{\R^d \times \R_+^*}  n \phi(x,y) d\mu_t(y,n)\right)  \mu_t(x,m) \right) \\
+ \partial_m \left( \left( \int_{\R^d \times \R_+^*} \overline{S}(x,m,y,n)  d\mu_t(y,n)\right)  \mu_t(x,m) \right) = 0 
\end{multline*}};
\node [block5] (B3) at (10.8,-6) {\textbf{The exchangeable mean-field limit equation for opinions} 
\begin{equation*}
\begin{array}{l}
\displaystyle\partial_t \mu_t(x) + \nabla\cdot ( \int_{\R^d} \phi(x,y) d\mu(y)\mu_t(x))\\ \displaystyle~~~~~~= \mu_t(x) \int_{\R^d } \hat{S}(x,y)  d\hat{\mu}_t(y) 
\end{array}
\end{equation*}};

\draw [arrow2] (A1) -- node[above] {$N \to \infty$ }  (A2);
\draw [arrow2] (A1) -- node[left] {$N \to \infty$ } (B1);
\draw [arrow, bend left=10,red] (A1) to node[right]{$N \to \infty$} (B3);
\draw [arrow1] (A2) -- node[left] {\circled{3}} (B3);
\draw [arrow1] (A2) -- node[left] {\circled{1}} (B1);
\draw [arrow1] (A2) -- node[left] {\circled{2}} (B2);
\draw [arrow3] (B1) -- node[left] {\circled{4}} (B2);
\draw [arrow3] (B2) -- node[right] {\circled{5}} (B3);
\draw [arrow, bend left=40,red] (A1) to node[right] {$N \to \infty$ } (B2);
\end{tikzpicture}
}
\end{center}
\caption{Links between the different equations. The arrow 1 corresponds to the weight dynamics $\displaystyle \psi_1(\xi,x(t,\cdot),m(t,\cdot)) =  \int_{I\times\R^d \times \R_+^*} \tilde{S}(\xi,x(t,\xi),m(t,\xi),\zeta,y,n)  d\tilde{\mu}_t^\zeta(y,n)d\zeta$, the arrow 2 to the weight dynamics $\displaystyle
\psi_2(\xi,x(t,\cdot),m(t,\cdot)) = \int_{\R^d \times \R_+^*} \overline{S}(x(t,\xi),m(t,\xi),y,n)  d\overline{\mu}_t(y,n) 
$  and the arrow 3 to  the weight dynamics $\displaystyle \psi_3(\xi,x(t,\cdot),m(t,\cdot)) = m(t,\xi) \int_{\R^d } \hat{S}(x(t,\xi),y)  d\hat{\mu}_t(y)$. The arrows 4 and 5 correspond to Remark \ref{arrow4_5}.\label{schema}}
\end{figure}
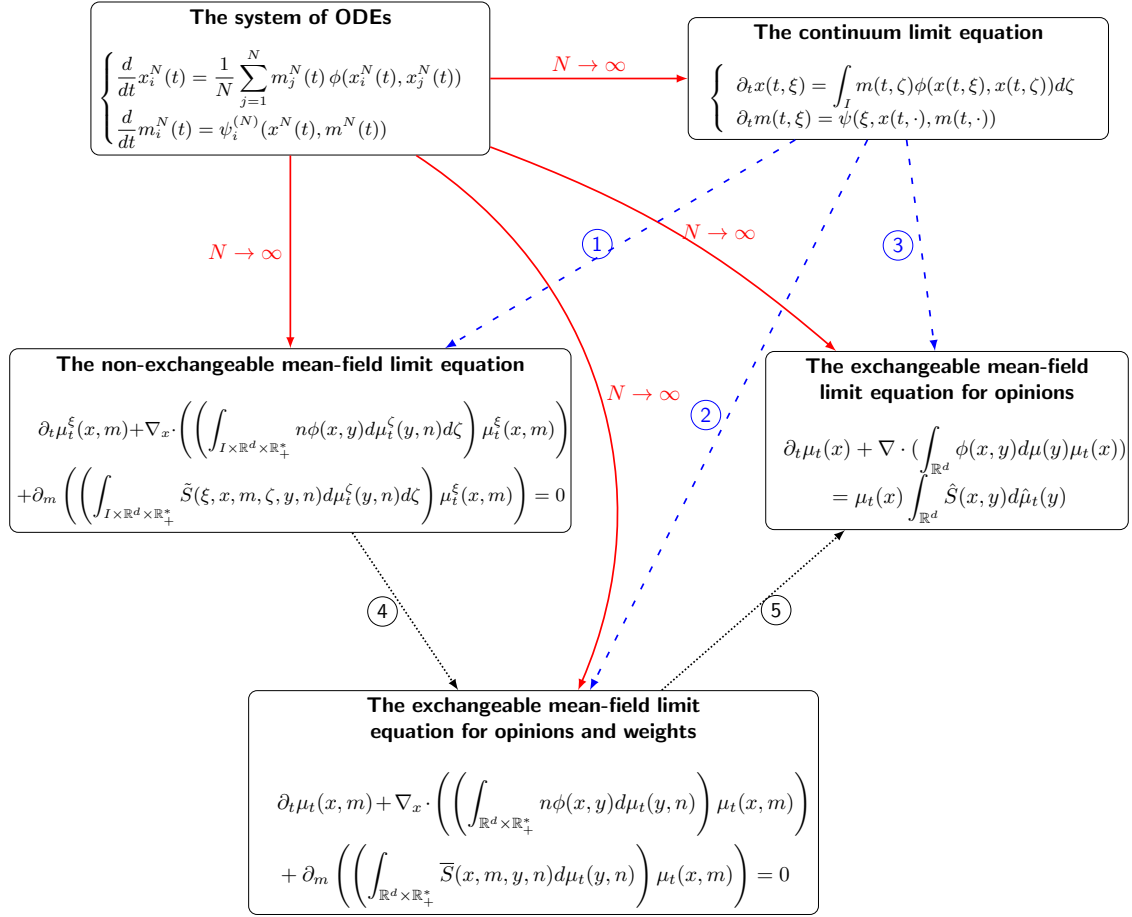

\subsubsection{From continuum limit to non-exchangeable mean-field limit}
We denote $\tilde{\mu}_t^\xi$ the following continuous empirical measure
\begin{equation}
\tilde{\mu}_t^\xi(x,m) =  \int_I \delta({x-x(t,\zeta)}) \delta({m-m(t,\zeta)})  \delta({\xi-\zeta})d\zeta.
\end{equation}
where $(x(t,\xi),m(t,\xi))$ is a solution to \eqref{eq:GraphLimit-gen2} for the weight dynamics 
\begin{equation}
\label{eq:wd_non_exchangeable}
\psi(\xi,x(t,\cdot),m(t,\cdot)) =  \int_{I\times\R^d \times \R_+^*} \tilde{S}(\xi,x(t,\xi),m(t,\xi),\zeta,y,n)  d\tilde{\mu}_t^\zeta(y,n)d\zeta .
\end{equation}
For all test functions $f\in C^\infty(I\times\Rd\times \R_+^*)$, it holds
\[
\begin{split}
& \frac{d}{dt}\int_{I\times\Rd \times \R_+^*} f(\xi,x,m) d\tmu_t^\xi(x,m) d\xi 
= \frac{d}{dt}\int_{I} f(\xi,x(t,\xi),m(t,\xi)) d\xi \\
= \quad & \int_{I} \nabla_xf(\xi,x(t,\xi),m(t,\xi)) \cdot \left( \int_I m((t,\zeta)\phi(x(t,\xi),x(t,\zeta)) d\zeta \right)  d\xi \\
&~~~~~~~~~~~ + \int_I \partial_m f(\xi,x(t,\xi),m(t,\xi))  \left(\int_{I\times\Rd \times \R_+^*} \tilde{S}(\xi,x(t,\xi),m(t,\xi),\zeta,y,n)  d\tilde{\mu}_t^\zeta(y,n)d\zeta \right) d\xi \\
= \quad & \int_{I\times\Rd \times \R_+^*}   \nabla_xf(\xi,x,m) \cdot \left( \int_{I\times\Rd \times \R_+^*} n \phi(x,y) d\tilde{\mu}_t^\zeta(y,n)d\zeta \right)  d\tilde{\mu}_t^\xi(x,m)  d\xi \\
&~~~~~~~~~~~ + \int_{I\times\Rd \times \R_+^*}  \partial_m f(\xi,x,m)  \left(\int_{I\times\Rd \times \R_+^*} \tilde{S}(\xi,x,m,\zeta,y,n)  d\tilde{\mu}_t^\zeta(y,n)d\zeta  \right) d\tilde{\mu}_t^\xi(x,m)  d\xi \\
\end{split}
\]
Thus, the continuous empirical measure  $\tilde{\mu}_t^\xi$  built from the solution to \eqref{eq:GraphLimit-gen2} for the weight dynamics \eqref{eq:wd_non_exchangeable} is a solution to the non-exchangeable mean-field limit equation \eqref{eq:non_exchangeable_mfl}. This computation corresponds to the arrow 1 in Figure \ref{schema}.

\subsubsection{From continuum limit to  exchangeable mean-field limits}

For weight dynamics of the form \eqref{eq:wd_exchangeable1}, similar computations against test functions $f\in C^\infty(\Rd\times \R_+^*)$ show that the continuous empirical measure $\overline{\mu}_t(x,m)$ defined in \eqref{eq:empirical_measure_exchangeable1} satisfies the exchangeable mean-field equation for opinions and weights \eqref{eq:exchangeable_mfl1}. Indeed, we get, for all test functions $f\in C^\infty(\Rd\times \R_+^*)$, 
\[
\begin{split}
& \frac{d}{dt}\int_{\Rd \times \R_+^*} f(x,m) d\overline{\mu}_t(x,m) d\xi 
= \frac{d}{dt}\int_{I} f(x(t,\xi),m(t,\xi)) d\xi \\
= \quad & \int_{I} \nabla_xf(x(t,\xi),m(t,\xi)) \cdot \left( \int_I m((t,\zeta)\phi(x(t,\xi),x(t,\zeta)) d\zeta \right)  d\xi \\
&~~~~~~~~~~~ + \int_I \partial_m f(x(t,\xi), m(t,\xi))  \left(\int_{\Rd \times \R_+^*} \overline{S}(x(t,\xi),m(t,\xi),y,n)  d\overline{\mu}_t(y,n)\right) d\xi \\
= \quad & \int_{\Rd \times \R_+^*} \nabla_xf(x,m) \cdot \left( \int_{\Rd \times \R_+^*} n \phi(x,y) d\overline{\mu}_t(y,n) \right)  d\overline{\mu}(x,m)\\
&~~~~~~~~~~~ + \int_{\Rd \times \R_+^*}  \partial_m f(x,m)  \left(\int_{\Rd \times \R_+^*} \overline{S}(x,m,y,n)  d\overline{\mu}_t(y,n) \right) d\overline{\mu}_t(x,m).  \\
\end{split}
\]
This computation corresponds to the arrow 2 in Figure \ref{schema}.

Lastly, similarly to \cite{AyiPouradierDuteil21}, for weight dynamics of the form \eqref{eq:wd_exchangeable2},  the continuous empirical measure $\hat{\mu}_t(x)$ defined in \eqref{eq:empirical_measure_exchangeable2} satisfies the exchangeable mean-field equation for opinions \eqref{eq:exchangeable_mfl2}. Indeed, for all test functions $f\in C^\infty(\Rd)$, 
\[
\begin{split}
& \frac{d}{dt}\int_{\Rd } f(x) d\hat{\mu}_t(x) d\xi 
= \frac{d}{dt}\int_{I} m(t,\xi) f(x(t,\xi)) d\xi \\
= \quad & \int_{I}  m(t,\xi)   \nabla_xf(x(t,\xi)) \cdot \left( \int_I m((t,\zeta)\phi(x(t,\xi),x(t,\zeta)) d\zeta \right)  d\xi \\
&~~~~~~~~~~~ + \int_I  m(t,\xi)   f(x(t,\xi))  \left(\int_{\Rd} \hat{S}(x(t,\xi),y)  d\hat{\mu}_t(y) \right) d\xi \\
= \quad & \int_{\Rd } \nabla_xf(x) \cdot \left( \int_{\Rd }  \phi(x,y) d\hat{\mu}_t(y) \right)  d\hat{\mu}(x)\\
&~~~~~~~~~~~ + \int_{\Rd }  f(x)  \left(\int_{\Rd} \hat{S}(x,y)  d\hat{\mu}_t(y) \right) d\hat{\mu}_t(x).  \\
\end{split}
\]
This computation corresponds to the arrow 3 in Figure \ref{schema}.
\begin{rem}
\label{arrow4_5}
We notice that $$\overline{\mu}_t(x,m) = \int_I \tilde{\mu}^\xi(x,m) d\xi$$ and $$\hat{\mu}_t(x) = \int_\R m \overline{\mu}_t(x,dm).$$
Thus, for the appropriate choice for the weight dynamics, this gives  the path to obtain \eqref{eq:exchangeable_mfl1} from \eqref{eq:non_exchangeable_mfl} and \eqref{eq:exchangeable_mfl2} from  \eqref{eq:exchangeable_mfl1} and corresponds to the arrows 4 and 5 in Figure \ref{schema}.
\end{rem}

\begin{rem}
Because of the presence of  nonlinearity, we cannot adapt the argument developed in Section \ref{non_exchangeable_to_graph_limit} and we cannot exhibit a path to get the continuum limit \eqref{eq:GraphLimit-gen2} from the non-exchangeable mean-field limit \eqref{eq:non_exchangeable_mfl}.
\end{rem}

The continuum limit can actually provide a tool for an alternative proof for the exchangeable mean-field limit. Indeed, noticing that we can rewrite $\hat{\mu}_t^N$ defined in \eqref{eq:empmes} as 
$$\hat{\mu}_t^N= \int_I m_N(t,\xi)  \delta({x-x_N(t,\xi)}) $$ with $(x_N,m_N)$ defined in \eqref{eq:xN}, we can easily show that $W_1(\hat{\mu}_t^N, \hat{\mu}_t) \xrightarrow[N\rightarrow+\infty]{} 0$ for all $t \in [0,T]$ provided the appropriate choice for the initial data using the convergence of $(x_N,m_N)$ to $(x,m)$ with $W_1$ being the $1$-Wasserstein distance. Moreover, we just showed that $\hat{\mu}_t$ happens to be a solution to \eqref{eq:exchangeable_mfl2}, hence the conclusion. 

All the results mentioned in that section require some regularity assumptions on both the interaction function and the weight dynamics. However, in \cite{porat2023graph}, by similar computations, they are able to obtain the derivation of the exchangeable  mean-field limit for the probability density of opinions associated to their singular weight dynamics. It is worth noticing that the continuum limit turns out to be a powerful tool since, in that case, there is no alternative proof through the classical approach. Indeed, all the classical approaches use some regularity of the source term which is no longer true for singular weight dynamics. 

Finally, noticing that can rewrite $\overline{\mu}_t^N$ defined in \eqref{eq:empirical_measure_exchangeable1} as 
$$\overline{\mu}_t^N= \int_I \delta({x-x_N(t,\xi)})  \delta({m-m_N(t,\xi)}),$$ we can similarly prove that  $W_1(\overline{\mu}_t^N, \overline{\mu}_t) \xrightarrow[N\rightarrow+\infty]{} 0$ for all $t \in [0,T]$ provided the appropriate choice for the initial data. Thus, this provides the derivation of \eqref{eq:exchangeable_mfl1} from \eqref{eq:syst-gen2} as $N$ goes to infinity.

\bibliographystyle{abbrv}
\bibliography{Biblio_Review}

\end{document}